\documentclass[12pt,a4paper,reqno]{amsart}
\pdfoutput=1
\linespread{1.3} 
\usepackage{amsfonts}
\usepackage{amsthm}
\usepackage{amsmath}
\usepackage{amssymb}
\usepackage{mathabx}
\usepackage{bm}
\usepackage{amscd}
\usepackage[latin2]{inputenc}
\usepackage{t1enc}
\usepackage[mathscr]{eucal}
\usepackage{indentfirst}
\usepackage{graphicx}
\usepackage{graphics}
\usepackage{pict2e}
\usepackage{epic}
\numberwithin{equation}{section}
\usepackage[margin=2.9cm]{geometry}
\usepackage{epstopdf} 
\usepackage[colorlinks,linkcolor=blue]{hyperref}
\usepackage{color}
\usepackage{todonotes}
\usepackage[capitalise,noabbrev]{cleveref}

\crefformat{equation}{(#2#1#3)}
\crefrangeformat{equation}{(#3#1#4) to~(#5#2#6)}

\setlength{\marginparwidth}{2.5cm}
\usepackage[normalem]{ulem}

\allowdisplaybreaks

\theoremstyle{plain}
\newtheorem{Thm}{Theorem}[section]
\newtheorem*{Thm*}{Theorem}
\newtheorem{Lem}[Thm]{Lemma}

\newtheorem{Prop}[Thm]{Proposition}
\theoremstyle{definition}

\newtheorem{Rem}[Thm]{Remark}
\newtheorem{?}[Thm]{Problem}

\newcommand{\p}{\partial}

\newcommand{\R}{\mathbb{R}}

\newcommand{\ovl}{\overline}
\newcommand{\e}{\varepsilon}

\newcommand{\ovlul}{\ovl{u}_l}
\newcommand{\ovlur}{\ovl{u}_r}

\begin{document}


\title[Two different perturbations at infinities]{
On Riemann solutions under different initial periodic perturbations at two infinities for 1-d scalar convex conservation laws
}

%
%
\author[Q. Yuan]{Qian YUAN}
\address[Q. Yuan]{The Institute of Mathematical Sciences \&  Department of Mathematics, The Chinese University of Hong Kong,	Shatin, N.T., Hong Kong}
\email{qyuan103@link.cuhk.edu.hk}

\author[Y. Yuan]{Yuan YUAN}
\thanks{The  research of Yuan YUAN is supported by the Start-up Research Grant of South China Normal University, 8S0328.}
\address[Y. Yuan]{South China Research Center for Applied Mathematics and Interdisciplinary Studies, South China Normal University,	Guangzhou, Guangdong, China}
\email{yyuan2102@m.scnu.edu.cn}

\subjclass[2010]{35L03, 35L65, 35L67}
\keywords{conservation laws, shock waves, rarefaction waves, periodic perturbations} 

\begin{abstract} 
	This paper is concerned with the large time behaviors of the entropy solutions to one-dimensional scalar convex conservation laws, of which the initial data are assumed to approach two arbitrary $ L^\infty $ periodic functions as $ x\rightarrow-\infty $ and $ x\rightarrow+\infty, $ respectively. 
	We show that the solutions approach the Riemann solutions at algebraic rates as time increases.
	Moreover, a new discovery in this paper is that the difference between the two periodic perturbations at two infinities may generate a constant shift on the background shock wave, which is different from the result in \cite{Xin2019},  	
	where the two periodic perturbations are the same.

\end{abstract}

\maketitle


\section{Introduction} 

In this paper, we consider the Cauchy problem for a convex scalar conservation law in one-dimensional spatial space:
\begin{align}
\p_t u(x,t)+\p_x f(u(x,t)) & = 0, \quad x \in \R, t>0, \label{equ1} \\
u(x,0) & = u_0(x), \quad x\in\R \label{ic0}
\end{align}
where the flux $f(u)\in C^2(\R)$ is strictly convex, i.e. $f''(u)>0$. 

It is well-known that for any $ L^\infty(\R) $ data $ u_0(x), $ the Cauchy problem \cref{equ1},\cref{ic0} admits a unique entropy solution $ u(x,t), $ satisfying the following entropy condition (see \cite{Oleinik1957}),
\begin{equation}\label{entropy}
u(x+a,t) - u(x,t) \leq \frac{Ea}{t} \quad \text{ a.e. } x\in\R, \forall t>0, a>0,
\end{equation}
where $ E>0 $ is a constant, depending only on $ f $ and $ u_0. $
When the initial data are the Riemann data: 
\begin{equation}\label{ic-Riemann}
u_0(x) = \begin{cases}
\ovlul, & \quad x<0, \\
\ovlur, & \quad x>0,
\end{cases}
\end{equation}
where $ \ovlul $ and $ \ovlur $ are two constants,
the unique entropy solution to \cref{equ1},\cref{ic0} is the well-known Riemann solutions:
\begin{itemize}
	\item[(1)] a shock wave $ u^S(x,t) $ if $ \ovlul > \ovlur, $ where 
	\begin{equation}\label{shock}
	u^S(x,t) := \begin{cases}
	\ovlul, & \quad x<st,\\
	\ovlur, & \quad x>st,
	\end{cases}
	\end{equation} 
	where $ s = \frac{f(\ovlul)-f(\ovlur)}{\ovlul-\ovlur} $ is the shock speed;
	
	\item[(2)] a centered rarefaction wave $ u^R(x,t) $ if $ \ovlul<\ovlur, $ where
	\begin{equation}
	u^R(x,t) := \begin{cases}
	\ovlul, & x\leq f'(\ovlul)t,\\
	(f')^{-1}(\frac{x}{t}), & f'(\ovlul)t < x\leq f'(\ovlur)t, \\
	\ovlur, & x>f'(\ovlur)t,
	\end{cases}
	\end{equation}
	which is Lipschitz continuous when $ t>0; $
	
	\item[(3)] a constant $ \ovlul $ if $ \ovlul = \ovlur. $
\end{itemize}


Riemann solutions are of great importance in the theories of conservation laws. They are the simplest entropy weak solutions, and can be applied as the building blocks to construct global solutions to conservation laws with more general initial data (see \cite{Glimm1965}). Moreover, they also govern the large time behaviors of entropy solutions to conservation laws with the initial data approaching constant states as $ |x|\rightarrow \infty. $ In fact, there have been numerous articles showing that the Riemann solutions are asymptotically stable under localized (for instance, compactly supported) perturbations; see Hopf \cite{Hopf1950}, Lax \cite{Lax1957b}, Ilin-Oleinik \cite{Ilin1960} and Liu \cite{Liu1978} for instance.


For the initial data oscillating at infinity, such as the periodic data, Lax \cite{Lax1957b}, Glimm-Lax \cite{GD} and Dafermos \cite{Dafe1} proved that the entropy solution tends to the constant average in the $ L^\infty $ norm at rates $ 1/t. $ And for the initial data which is a periodic perturbation around a shock wave or a rarefaction wave: 
\begin{equation}\label{ic2}
u_0(x) =
\begin{cases}
\ovl{u}_l+w_{0}(x), & \quad x<0,\\
\ovl{u}_r+w_{0}(x), & \quad x>0,
\end{cases}		
\end{equation}
where $ \ovlul\neq \ovlur $ and $w_0$ is an arbitrary bounded periodic function with zero average, 
Xin-Yuan-Yuan \cite{Xin2019} showed that the entropy solution to \cref{equ1}, \cref{ic2} tends to the background wave in the $ L^\infty $ norm at rates $ 1/t. $
We also refer readers to \cite{XYY2019} for the $ L^\infty $ stability of viscous shock profiles and rarefaction waves under periodic perturbations for the viscous conservation laws.

\vspace{0.3cm}

In this paper, we are concerned about more complicated initial data than that in \cite{Xin2019}, i.e.,  $ u_0 $ coincides two different periodic functions, especially maybe with two different periods, on the two sides of $x$-axis:
\begin{equation}\label{ic1}
u_0(x) =
\begin{cases}
\ovl{u}_l+w_{0l}(x), & \quad x<-N,\\
\ovl{u}_r+w_{0r}(x),& \quad x>N,
\end{cases}		
\end{equation}
where $ N>0 $ is a constant,  and
$w_{0l}(x) \in L^{\infty}(\R)$ and $ w_{0r}(x)\in L^{\infty}(\R)$ are two arbitrary bounded periodic functions with periods $p_l>0$ and $p_r>0$, respectively. 
By adding the averages of $ w_{0l} $ and $ w_{0r} $ on $ \ovlul $ and $ \ovlur, $ respectively, we can assume without loss of generality that both averages are zero, i.e., $ w_{0l} $ and $ w_{0r} $ satisfy
\begin{equation}\label{zero-average}
\frac{1}{p_l} \int_0^{p_l} w_{0l}(x) dx = \frac{1}{p_r} \int_0^{p_r} w_{0r}(x) dx = 0.
\end{equation}
\vspace{0.2cm}


It fails to apply the analysis in \cite{Xin2019}, which is concerned with the initial data \cref{ic2},  directly to the case for two different periodic perturbations \cref{ic1}. For instance, when $ \ovlul>\ovlur, $
a key lemma (Lemma 3.9) in \cite{Xin2019} shows that the entropy solution to \cref{equ1}, \cref{ic2} coincides the two periodic solutions induced by the initial values $\ovlul+w_0$ and $\ovlur+w_0$ on the two sides of a generalized forward characteristic, respectively, 
where these two periodic solutions have very nice properties --- owning divides and decaying fast  to the constant averages as time increases. 
However, for the problem \cref{equ1},\cref{ic1}, 
\cite[Lemma 3.9]{Xin2019} shows that the entropy solution coincides with the two solutions induced by the initial data $ \ovlul+(u_0-\ovlul)\chi_{(-\infty, 0)}+(u_0-\ovlur)\chi_{(0,\infty)} $ and $ \ovlur+(u_0-\ovlul)\chi_{(-\infty, 0)}+(u_0-\ovlur)\chi_{(0,\infty)}, $ respectively, 
which are not periodic any more.
Fortunately, by making use of a comparison theorem \cite[Theorem 11.4.1]{Dafe}, it is found that there are two ``well controlled'' Lipschitz continuous curves $ X_1^*(t) $ and $ X_2^*(t), $  such that $u$ coincides with the periodic solutions induced by the initial data $\ovlul+w_{0l}$ and $\ovlur+w_{0r}$ as $ x<X_1^*(t) $ and $ x>X_2^*(t), $ 
respectively;  see \cref{Prop-gluing}. Although $ X_1^*(t) $ and $ X_2^*(t) $ may not be generalized characteristics of $u,$ they can be well controlled in the sense that for the shock wave ($\ovlul>\ovlur$), they coincide after a finite time; for the rarefaction wave and the constant ($\ovlul\leq\ovlur$), 
both the distances $ |X_1^*(t) - f'(\ovlul)t| $ and $ |X_2^*(t) - f'(\ovlur)t| $ can be bounded by $ O(1)t^{1/2}, $ with the aid of two time-invariant variables; see Lemmas \ref{Lem-shock} and \ref{Lem-X*} for details.
The proof relies on a similar, but more complicated analysis as in \cite{Liu1978}.

In this paper, we prove that if $ \ovlul\leq \ovlur, $ the entropy solution to \cref{equ1}, \cref{ic1} tends to the background Riemann solution in the $ L^\infty $ norm. While if $ \ovlul>\ovlur, $ the asymptotic behavior of the solution is the background shock wave with a constant shift, which is due to the non-zero mass of the compact part of the perturbation and the difference between the periodic perturbations $ w_{0l} $ and $ w_{0r} $ at two infinities; see \cref{shift}. 
It is noted that the result in this paper is compatible with the previous results in Liu \cite{Liu1978} with only compact perturbations and Xin-Yuan-Yuan \cite{Xin2019} with the initial data \cref{ic2}. 

\vspace{0.3cm}

This paper is organized as follows. In Section 2 we first present some notations and the theorems. In Section 3, the decay properties of periodic solutions, the properties of the generalized characteristics and some comparison results are shown. Then the stabilities of shock waves, rarefaction waves and the constants are proved in the subsequent sections. 

\vspace{0.5cm}

\section{Main results}
Let $u_l(x,t)$ and $u_r(x,t)$ denote two periodic entropy solutions to \cref{equ1} with the following initial data
\begin{equation}
u_l(x,0) = \ovl{u}_l+w_{0l}(x) ~\text{ and }~ u_r(x,0) = \ovl{u}_r+w_{0r}(x),~ \text{respectively.}
\end{equation}
For any $u,v\in \R,$ let
\begin{equation}\label{Def-lam-sig}
\sigma(u,v):= \int_0^1 f'(u+\theta (v-u)) d\theta.
\end{equation}
Therefore, $\sigma(\ovlul,\ovlur)$ is the shock speed of the shock $ u^S, $  and if $ u\geq v, $ it holds that
\begin{equation}
\sigma(u,v) \leq \sigma (u,s) \qquad \text{for}~\forall s\in[v,u],                      
\end{equation}
\begin{equation}
f'(v) =\sigma(v,v)\leq \sigma(u,v)\leq \sigma(u,u)=f'(u).
\end{equation}

The main theorems of this paper are presented as follows:
\begin{Thm}\label{Thm-shock}
	Suppose that $ \ovlul>\ovlur $ in \eqref{ic1}. Then for any periodic perturbations $ w_{0l} \in L^\infty(\R) $ and $ w_{0r} \in L^\infty(\R) $ satisfying \eqref{zero-average}, there exist a finite time $ T_S>0 $ and a unique Lipschitz continuous curve $ X(t)\in \text{Lip}(T_S, +\infty), $ such that the entropy solution $ u(x,t) $ to \eqref{equ1}, \eqref{ic1} satisfies that for all $ t>T_S, $
	\begin{equation}
	u(x,t) = 
	\begin{cases}
	u_l(x,t) & \text{ if } x<X(t), \\
	u_r(x,t) & \text{ if } x>X(t).
	\end{cases}
	\end{equation}
	Moreover, there exists a constant $ C>0, $ independent of time $ t, $ such that
	\begin{equation}\label{lb}
	\sup_{x<X(t)} |u(x,t)-\ovl{u}_l| +\sup_{x>X(t)} |u(x,t)-\ovl{u}_r| + |X(t)-st-X_\infty| \leq \frac{C}{t}, \qquad t>0,
	\end{equation}
	with the constant shift $ X_\infty $ given by
	\begin{equation}\label{shift}
	\begin{aligned}
	X_\infty := \frac{1}{\ovlul-\ovlur} \Big[ & \int_{-\infty}^{0} \left( u_0(y)-\ovlul-w_{0l}(y)\right)  dy + \int_{0}^{+\infty} \left( u_0(y)-\ovlur-w_{0r}(y)\right)  dy \\
	& - \min\limits_{x\in\R} \int^x_0 w_{0l}(y) dy + \min\limits_{x\in\R} \int^x_0 w_{0r}(y) dy \Big].
	\end{aligned}
	\end{equation}
\end{Thm}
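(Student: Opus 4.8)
The plan is to establish three things in sequence: (i) the structure result that $u$ coincides with the periodic solutions $u_l$ and $u_r$ on the two sides of a single Lipschitz curve $X(t)$ after a finite time $T_S$; (ii) the decay of the periodic pieces toward $\ovl{u}_l$ and $\ovl{u}_r$ at rate $1/t$; and (iii) the identification of the shift $X_\infty$ and the rate estimate for $|X(t)-st-X_\infty|$. For step (i) I would invoke \cref{Prop-gluing}: the entropy solution $u$ agrees with $u_l$ for $x<X_1^*(t)$ and with $u_r$ for $x>X_2^*(t)$, and by \cref{Lem-shock} the two control curves $X_1^*$ and $X_2^*$ merge after a finite time in the shock case $\ovl{u}_l>\ovl{u}_r$. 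Once they coincide, the solution in a neighborhood of the transition is sandwiched between two periodic solutions whose profiles near the interface are $\ovl{u}_l$ and $\ovl{u}_r$; a Rankine--Hugoniot/Oleinik-type argument produces a single admissible shock curve $X(t)$ separating them, and its uniqueness follows from the uniqueness of the generalized forward characteristic emanating from that point. I would set $T_S$ to be this merging time.

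For step (ii), the $1/t$ decay $\sup_{x<X(t)}|u_l(x,t)-\ovl{u}_l|\le C/t$ is exactly the decay property of periodic entropy solutions to a convex conservation law with zero-average perturbation — this is the classical Lax--Glimm--Dafermos result, which by the organization of the paper is recorded among "the decay properties of periodic solutions" in Section 3; I would cite that lemma directly. Combined with step (i) this already gives the first two terms in \cref{lb}.

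The genuinely delicate part is step (iii): pinning down $X_\infty$. The strategy is conservation of mass. Fix a large time $t$ and integrate the conservation law over a rectangle $[-L,L]\times[0,t]$ with $L$ chosen so that the characteristic speeds have propagated the influence of the far fields appropriately; more cleanly, compare $u(\cdot,t)$ with the pure shock $u^S(\cdot-X_\infty,t)$ shifted by the claimed amount and show the $L^1$ difference on either side of $X(t)$ stays bounded. The total mass to the left that must be "absorbed" across the shock is the excess area $\int_{-\infty}^0(u_0-\ovl{u}_l-w_{0l})\,dy+\int_0^{+\infty}(u_0-\ovl{u}_r-w_{0r})\,dy$ from the compactly-supported part of the perturbation, plus a contribution from the periodic tails: because the periodic solution $u_l$ has "divides" (the special backward characteristics along which $u_l=\ovl{u}_l$), the net signed area between $u_l$ and $\ovl{u}_l$ accumulated up to the shock is governed by $-\min_x\int_0^x w_{0l}(y)\,dy$, and symmetrically $+\min_x\int_0^x w_{0r}(y)\,dy$ on the right; dividing the total by the jump $\ovl{u}_l-\ovl{u}_r$ and using Rankine--Hugoniot gives the displacement $X_\infty$. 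The rate $|X(t)-st-X_\infty|\le C/t$ then comes from the fact that the remaining discrepancy in the mass balance is precisely the area under $u_l-\ovl{u}_l$ and $u_r-\ovl{u}_r$ over the regions not yet "swept," which by step (ii) is $O(1/t)$ (the integral of a $1/t$-decaying, spatially-periodic-in-phase quantity over an $O(1)$-width zone near the characteristic edges).

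The main obstacle I anticipate is making the mass-balance argument rigorous in the presence of the two mismatched periods $p_l\ne p_r$: one cannot simply integrate over an integer number of periods on both sides simultaneously, so the "divide" structure of each periodic solution must be used separately on each side, and the error terms from the partial periods near $X(t)$, near the left edge $f'(\ovl{u}_l)t$-type region, and near the right edge must each be shown to be $O(1/t)$ rather than $O(1)$. Controlling these boundary-zone errors — essentially showing that the oscillatory contributions telescope against the $\min_x\int_0^x w_0$ terms up to $O(1/t)$ — is where the careful generalized-characteristic bookkeeping (the analogue of Liu's analysis, as the authors note) will be needed, and I would lean on \cref{Lem-shock} for the quantitative control of $X_1^*,X_2^*$ and hence of $X(t)$.
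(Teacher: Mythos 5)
Your proposal follows the paper's own route: \cref{Prop-gluing} together with \cref{Lem-shock} give the structure and the time $T_S$, \cref{Lem-per} gives the $1/t$ decay of $u_l,u_r$ to $\ovlul,\ovlur$, and the shift $X_\infty$ is identified by a mass balance that exploits the divides of the two periodic solutions, exactly as you describe. The one refinement the paper makes --- which dissolves the ``main obstacle'' you flag about mismatched periods and boundary-zone errors --- is to integrate the equation not over a fixed rectangle $[-L,L]\times[0,t]$ but over the trapezium bounded below by the $x$-axis and laterally by the two divides $\Gamma_l^{K_1}$ and $\Gamma_r^{K_1}$ issuing from the minimizing points $z_l-K_1p_l$ and $z_r+K_1p_r$: along these divides $u\equiv\ovlul$ and $u\equiv\ovlur$ exactly, so the lateral flux terms are computed with no error, the initial-data integral produces the $\min_x\int_0^x w_{0l}$ and $\min_x\int_0^x w_{0r}$ terms directly from \cref{points-min}, and the residual spatial integrals of $u_l-\ovlul$ and $u_r-\ovlur$ from a divide to $X(t)$ are $O((p_l^2+p_r^2)/t)$ by zero-average periodicity plus \cref{Lem-per}, with no period-matching or telescoping required.
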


\begin{Thm}\label{Thm-rare}
	Suppose that $ \ovlul < \ovlur $ in \cref{ic1}. Then for any periodic perturbations $ w_{0l} \in L^\infty(\R) $ and $ w_{0r} \in L^\infty(\R) $ satisfying \eqref{zero-average}, the entropy solution $ u(x,t) $ to \cref{equ1},\cref{ic1} satisfies that
	\begin{equation}\label{result-rare}
	\sup_{x\in\R} |u(x,t) - u^R(x,t)|\leq \frac{C}{\sqrt{t}}, \quad t>0,
	\end{equation}
	where $ C>0 $ is a constant independent of time $ t. $
\end{Thm}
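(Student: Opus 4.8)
The plan is to prove Theorem~\ref{Thm-rare} by squeezing the entropy solution $u$ between two ``wings'' that are controlled by periodic solutions and a central region that is controlled by the rarefaction wave. First I would invoke \cref{Prop-gluing} to obtain the Lipschitz curves $X_1^*(t)$ and $X_2^*(t)$ such that $u(x,t)=u_l(x,t)$ for $x<X_1^*(t)$ and $u(x,t)=u_r(x,t)$ for $x>X_2^*(t)$; by Lemma~\ref{Lem-X*} (the rarefaction/constant case) one has $|X_1^*(t)-f'(\ovlul)t|+|X_2^*(t)-f'(\ovlur)t|\le O(1)t^{1/2}$. In the left wing $x<X_1^*(t)$, the decay properties of periodic solutions (Section~3) give $\sup_x|u_l(x,t)-\ovlul|\le C/t$, and for $x<X_1^*(t)\le f'(\ovlul)t+O(t^{1/2})$ the rarefaction profile satisfies $|u^R(x,t)-\ovlul|$... well, here $u^R$ may differ from $\ovlul$ only on the strip $f'(\ovlul)t<x\le f'(\ovlur)t$, so on the overlap region $f'(\ovlul)t<x<X_1^*(t)$ I would use the Lipschitz bound $|u^R(x,t)-\ovlul|\le C(x-f'(\ovlul)t)/t\le C/\sqrt t$. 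Combining, $|u(x,t)-u^R(x,t)|\le C/\sqrt t$ on the entire left wing, and symmetrically on the right wing $x>X_2^*(t)$.

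Second, I would handle the central region $X_1^*(t)\le x\le X_2^*(t)$. Here I expect to use the entropy condition \cref{entropy} together with a comparison argument: the solution $u$ is trapped between the entropy solutions of Riemann-type problems whose data are obtained by replacing the perturbed tails with their constant averages $\ovlul,\ovlur$ beyond $\pm N$. More concretely, by finite propagation speed and the $L^\infty$ bound on $w_{0l},w_{0r}$, together with the comparison theorem \cite[Theorem~11.4.1]{Dafe}, one controls $u$ from above and below by shifts of $u^R$. The key quantitative input is Oleinik's one-sided inequality \cref{entropy}: for $x$ in the rarefaction fan, $u(x+a,t)-u(x,t)\le Ea/t$ controls the solution from above, while a matching lower bound comes from tracing backward genuine characteristics that must originate in the region $|x_0|\le N$ where the data is within $O(1)$ of the Riemann data, giving $|u(x,t)-u^R(x,t)|\le C/t$ in the interior of the fan and $C/\sqrt t$ near its edges (where the $t^{1/2}$-wandering of $X_1^*,X_2^*$ matters).

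The main obstacle, I expect, is the matching at the two edges of the rarefaction fan, i.e., near $x=f'(\ovlul)t$ and $x=f'(\ovlur)t$, where the curves $X_1^*(t),X_2^*(t)$ only stay within $O(t^{1/2})$ of the fan edges rather than $O(1)$; this is precisely what forces the slower $t^{-1/2}$ rate (as opposed to the $t^{-1}$ rate in the shock case of Theorem~\ref{Thm-shock}). Controlling this requires the two ``time-invariant variables'' mentioned in the introduction (the quantities whose existence underlies Lemma~\ref{Lem-X*}): one shows that a suitable functional of $u_l$ restricted to a backward characteristic is conserved, pinning $X_1^*(t)-f'(\ovlul)t$ to an $O(t^{1/2})$ band, and then the Lipschitz continuity of $u^R$ with constant $O(1/t)$ converts an $O(t^{1/2})$ spatial error into an $O(t^{-1/2})$ error in $u$. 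Once the three regional estimates are in hand, I would conclude by taking the supremum over $x\in\R$ of the pieced-together bound, obtaining \cref{result-rare}. A minor technical point to dispatch along the way is that for small $t$ the bound $C/\sqrt t$ is vacuous on bounded sets since $\|u\|_{L^\infty}$ and $\|u^R\|_{L^\infty}$ are both finite, so one only needs the estimate for $t\ge 1$, say.
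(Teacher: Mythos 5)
Your proposal follows essentially the same route as the paper: decompose $\R$ by the curves $X_1^*,X_2^*$ and the fan edges, apply \cref{Prop-gluing} with \cref{Lem-per} on the outer wings, and in the central region use the backward-characteristic representation $f'(u(x,t))=(x-x^0)/t$ with $x^0$ confined to the bounded set $[\Gamma_l^K(0),\Gamma_r^K(0)]$ together with \cref{Lem-X*}, the $t^{-1/2}$ rate coming precisely from the $O(\sqrt{t})$ gap between $X_i^*$ and the fan edges. Two minor simplifications over your sketch: the paper needs neither the Oleinik inequality nor comparison with shifted Riemann solutions in the central region, and since $X_1^*(t)\le\Gamma_l^K(t)\le f'(\ovlul)t$ by construction, the overlap region you worry about in the left wing is empty.
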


\begin{Thm}\label{Thm-const}
	Suppose that $ \ovlul = \ovlur $ in \cref{ic1}. Then for any periodic perturbations $ w_{0l} \in L^\infty(\R) $ and $ w_{0r} \in L^\infty(\R) $ satisfying \eqref{zero-average}, the entropy solution $ u(x,t) $ to \cref{equ1},\cref{ic1} satisfies that
	\begin{equation}\label{result-const}
	\sup_{x\in\R} |u(x,t) - \ovlul|\leq \frac{C}{\sqrt{t}}, \quad t>0,
	\end{equation}
	where $ C>0 $ is a constant independent of time $ t. $
\end{Thm}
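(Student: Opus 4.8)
The plan is to reduce the claim to three ingredients already available: the gluing picture of \cref{Prop-gluing}, the classical algebraic decay of periodic entropy solutions to their averages (recorded in Section~3), and the Oleinik entropy estimate \cref{entropy}; the key simplification in the constant case $\ovlul=\ovlur$ is that the two characteristic speeds $f'(\ovlul)$ and $f'(\ovlur)$ coincide, so the two control curves $X_1^*,X_2^*$ of \cref{Prop-gluing} trap a thin strip around the single line $x=f'(\ovlul)t$. First I would invoke \cref{Prop-gluing} to get Lipschitz curves $X_1^*(t)\le X_2^*(t)$ with $u(x,t)=u_l(x,t)$ for $x<X_1^*(t)$ and $u(x,t)=u_r(x,t)$ for $x>X_2^*(t)$. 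Since $w_{0l},w_{0r}$ have zero average, the decay estimate for periodic solutions gives $\sup_x|u_l(x,t)-\ovlul|+\sup_x|u_r(x,t)-\ovlur|\le C/t$ (here and below $C$ is a generic constant independent of $t$), so on the two outer regions $\{x<X_1^*(t)\}$ and $\{x>X_2^*(t)\}$ the bound $|u(x,t)-\ovlul|\le C/\sqrt t$ holds, indeed with the better rate $C/t$.

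The remaining work is the middle strip $X_1^*(t)\le x\le X_2^*(t)$. Here I would use \cref{Lem-X*}: because $\ovlul=\ovlur$, both $|X_1^*(t)-f'(\ovlul)t|$ and $|X_2^*(t)-f'(\ovlur)t|$ are $O(1)\,t^{1/2}$, hence $X_2^*(t)-X_1^*(t)\le C\sqrt t$. Fix $t>0$ and a point $x$ in the strip. Choosing $x'$ slightly to the left of $X_1^*(t)$, where $u(x',t)=u_l(x',t)\le\ovlul+C/t$, and applying \cref{entropy} with $a=x-x'>0$ gives $u(x,t)\le u(x',t)+E(x-x')/t\le \ovlul+C/t+E(C\sqrt t+o(1))/t$; letting $x'\uparrow X_1^*(t)$ yields $u(x,t)\le\ovlul+C/\sqrt t$. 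Symmetrically, choosing $x''$ slightly to the right of $X_2^*(t)$, where $u(x'',t)=u_r(x'',t)\ge\ovlur-C/t$, and applying \cref{entropy} with $a=x''-x>0$ gives $u(x,t)\ge u(x'',t)-E(x''-x)/t\ge\ovlur-C/\sqrt t=\ovlul-C/\sqrt t$. Since \cref{entropy} holds for a.e.\ $x$ and $u(\cdot,t)\in L^\infty$, these two one-sided bounds combine to $\sup_{X_1^*(t)\le x\le X_2^*(t)}|u(x,t)-\ovlul|\le C/\sqrt t$. (Morally, the strip is exactly the region where the $N$-wave generated by the non-periodic ``compact part'' of the perturbation lives, as in Liu \cite{Liu1978}, and the Oleinik estimate is precisely what forces its amplitude to decay like $t^{-1/2}$.)

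Assembling the three regions gives $\sup_{x\in\R}|u(x,t)-\ovlul|\le C/\sqrt t$ for all sufficiently large $t$; for $t$ in a bounded interval $(0,t_0]$ the maximum principle for scalar conservation laws gives $\|u(\cdot,t)-\ovlul\|_{L^\infty}\le\|u_0-\ovlul\|_{L^\infty}\le(\|u_0-\ovlul\|_{L^\infty}\sqrt{t_0}\,)/\sqrt t$, and enlarging $C$ absorbs this, yielding \cref{result-const} for all $t>0$. I expect the substantive difficulty to lie not in this argument but upstream, in \cref{Prop-gluing} and \cref{Lem-X*} (constructing the curves $X_1^*,X_2^*$ and establishing the $O(\sqrt t)$ control via the time-invariant variables); within the present proof the only care needed is bookkeeping — justifying the one-sided evaluations at $X_1^*,X_2^*$ through the a.e.\ form of \cref{entropy}, and ensuring the constant in \cref{Lem-X*} is time-uniform so that the strip width is genuinely $O(\sqrt t)$.
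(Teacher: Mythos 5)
Your proof is correct, and its skeleton --- the three-region decomposition via $X_1^*,X_2^*$, \cref{Prop-gluing} together with \cref{Lem-per} on the two outer regions, and \cref{Lem-X*} to confine the middle strip to an $O(\sqrt t)$ neighbourhood of the line $x=f'(\ovlul)t$ --- is exactly the paper's (the paper disposes of the constant case by saying it is ``similar to the proof of \cref{Thm-rare}, regardless of the case iii)''). Where you genuinely diverge is the mechanism inside the strip. The paper uses the backward-characteristic representation: for $x$ between the two curves the extreme backward characteristics land in the fixed interval $[\Gamma_l^K(0),\Gamma_r^K(0)]$, which gives the two-sided bound \cref{ineq-1} on $f'(u(x,t))$ and, combined with \cref{Lem-X*}, pins $f'(u(x,t))$ to within $C/\sqrt t$ of $f'(\ovlul)$. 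You instead use only the Oleinik one-sided Lipschitz estimate \cref{entropy} propagated across a strip of width $O(\sqrt t)$ (note $X_1^*\le X_2^*$ always, so the width bound does follow from the two one-sided estimates of \cref{Lem-X*} once $f'(\ovlul)=f'(\ovlur)$), anchored to the periodic values of $u_l$ and $u_r$ just outside the strip. Both routes lose the same power $t^{-1/2}$ for the same underlying reason; yours is a bit more elementary in that it never needs to locate the feet of the backward characteristics, while the paper's yields the sharper pointwise information $f'(u(x,t))=(x-x^0)/t$ with $x^0$ in a fixed compact set. Your explicit treatment of small times and of the a.e.\ nature of \cref{entropy} supplies details the paper leaves implicit. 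I see no gap.
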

	
%


Compared with the previous result \cite{Xin2019}, which is concerned with the same periodic perturbation on the two sides of $x-$axis, the shift $- \min\limits_{x\in\R} \int^x_0 w_{0l}(y) dy + \min\limits_{x\in\R} \int^x_0 w_{0r}(y) dy $ is new in this paper,
which is essentially due to the difference between these two masses. Thus, even if two periodic perturbations have same periods, the shift may be non-zero in general.


\vspace{0.5cm}

\section{Preliminaries}

Before proving the theorem, in this section we present some preliminary lemmas, and also important propositions derived from the comparison principles of conservation laws.

It is well-known in the theory of generalized characteristics that for one-dimensional scalar convex conservation laws, through every point $(x,t)\in \R \times[0,+\infty)$ pass two forward and two backward extreme generalized characteristics. Moreover, when $t>0,$ the forward generalized characteristic is unique and the two backward extreme generalized characteristics are straight lines.
We refer to \cite[Chapters 10 \& 11]{Dafe} for details, or \cite[Definition 3.1 \& Lemma 3.2]{Xin2019} for a short summary.

\begin{Lem}\label{Lem-per}
	Assume that the initial data $ u_0 \in L^\infty(\R) $ in \cref{ic0} is periodic with period $ p>0 $ and average $ \ovl{u} := \frac{1}{p} \int_{0}^{p} u_0(x)dx. $ Then the entropy solution $ u(x,t) $ to \cref{equ1}, \cref{ic0} is also periodic with period $ p>0 $ and average $ \ovl{u} $ for each $ t\geq 0, $ and satisfies that
	\begin{equation}
		\| u(\cdot,t) - \ovl{u} \|_{L^\infty(\R)} \leq \frac{C}{t}, \quad t>0,
	\end{equation}
	where $ C>0 $ is a constant, independent of time $ t. $
\end{Lem}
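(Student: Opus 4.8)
\emph{Proof idea.} The plan is to establish the three assertions in turn --- spatial $p$-periodicity, conservation of the average $\ovl u$, and the decay estimate --- the last being where the actual content lies, although it too reduces to a short argument once the Oleinik estimate \cref{entropy} and the conservation of the average are in hand.

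For periodicity I would use that \cref{equ1} is invariant under the translation $x\mapsto x+p$: the function $(x,t)\mapsto u(x+p,t)$ is again an entropy solution of \cref{equ1} with initial data $u_0(\cdot+p)=u_0$, so by the uniqueness of the entropy solution for $L^\infty$ data it coincides with $u$; hence $u(\cdot,t)$ is $p$-periodic for every $t\ge 0$. For the average I would note that $u$ descends to an entropy solution of \cref{equ1} on the torus $\R/p\mathbb{Z}$ and test the weak formulation against functions that depend only on $t$; this forces $t\mapsto \int_0^p u(x,t)\,dx$ to be constant a.e. in $t$, and by the $L^1_{\mathrm{loc}}$-continuity in time of entropy solutions it is constant, equal to $\int_0^p u_0=p\,\ovl u$, so $u(\cdot,t)$ has average $\ovl u$ for all $t\ge 0$.

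For the decay I would fix $t>0$ and pass to the representative of $u(\cdot,t)$ for which $x\mapsto u(x,t)-Ex/t$ is non-increasing; this is exactly the content of \cref{entropy} (and it makes $u(\cdot,t)\in BV_{\mathrm{loc}}$ for $t>0$). Suppose $u(x_0,t)=\ovl u+\delta$ with $\delta>0$. Reading \cref{entropy} as $u(z,t)\ge u(x_0,t)-E(x_0-z)/t$ for $z<x_0$ gives $u(z,t)-\ovl u>0$ throughout the interval $(x_0-\delta t/E,\,x_0)$. If $\delta t/E\ge p$, this interval contains a full period $(x_0-p,\,x_0)$ on which $u(\cdot,t)-\ovl u$ is a.e. strictly positive, contradicting $\int_{x_0-p}^{x_0}(u(z,t)-\ovl u)\,dz=0$, which holds by the periodicity and the conservation of the average just established. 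Hence $\delta<Ep/t$, i.e. $u(x_0,t)-\ovl u<Ep/t$. The symmetric argument, applied to the right of a point where $u$ lies below $\ovl u$, yields $\ovl u-u(x_0,t)<Ep/t$ there, so $\|u(\cdot,t)-\ovl u\|_{L^\infty(\R)}\le Ep/t$; this is the assertion with $C:=Ep$, which is independent of $t$.

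There is no serious obstacle here; the one point demanding care is the passage to the good pointwise representative, so that \cref{entropy} may be invoked as a genuine one-sided Lipschitz bound rather than merely an almost-everywhere inequality in the pair $(x,a)$. It is worth emphasising that the decay comes out at the sharp rate $1/t$ --- not $1/\sqrt t$, as a crude Lax--Oleinik argument would give --- precisely because periodicity forces the mean of $u(\cdot,t)-\ovl u$ to vanish already over a single period, which is exactly what excludes the case $\delta t/E\ge p$.
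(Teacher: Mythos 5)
Your proof is correct. The paper itself gives no proof of \cref{Lem-per}: it is quoted as a classical result of Lax, Glimm--Lax and Dafermos (cited in the introduction), so there is nothing in the text to compare against line by line. Your argument is the standard one behind those references --- periodicity by translation invariance plus uniqueness, conservation of the mean by integrating over one period, and the $1/t$ decay by playing the Oleinik one-sided Lipschitz condition \cref{entropy} against the vanishing of $\int_{x_0-p}^{x_0}\bigl(u(z,t)-\ovl{u}\bigr)\,dz$ --- and all the steps check out, including the direction in which you read \cref{entropy} and the care taken with the monotone representative. The resulting constant $C=Ep$ depends only on $f$, $\|u_0\|_{L^\infty}$ and $p$, which is exactly what the lemma asserts.
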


\begin{Lem}\cite[Proposition 3.2]{Dafe1}\label{Lem-div}
	Assume that the initial data $ u_0 \in L^\infty(\R) $ in \cref{ic0}. Then the entropy solution $ u(x,t) $ to \cref{equ1}, \cref{ic0} takes a constant value $ \ovl{u} $ along the straight line $ x = f'(\ovl{u}) t + x_0, $ if and only if 
	\begin{equation}
	\int_{x_0}^x (u_0(y)-\ovl{u}) dy \geq 0 \quad \forall x\in\R.
	\end{equation}
\end{Lem}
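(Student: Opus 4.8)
The plan is to read everything off from the Lax--Oleinik formula. Write $g:=(f')^{-1}$ and let $f^{*}(p):=\sup_{v}\bigl(pv-f(v)\bigr)$ be the Legendre transform of $f$; since $f\in C^{2}$ is strictly convex, $f^{*}$ is $C^{2}$ and strictly convex on the interior of its effective domain, with $(f^{*})'=g$, and in particular $(f^{*})'\bigl(f'(v)\bigr)=v$ for every $v$. With $U_{0}(x):=\int_{0}^{x}u_{0}(y)\,dy$, the entropy solution is represented by
\[
u(x,t)=g\!\left(\frac{x-y(x,t)}{t}\right),
\]
where $y(x,t)$ is a minimizer over $y\in\mathbb{R}$ of $\Phi_{x,t}(y):=U_{0}(y)+t\,f^{*}\!\bigl(\tfrac{x-y}{t}\bigr)$; moreover, since $g$ is increasing and $u$ obeys the Oleinik estimate, a value $\bar u$ lies between the one-sided limits $u(x\pm,t)$ if and only if $x-tf'(\bar u)$ is a minimizer of $\Phi_{x,t}$. (These facts are classical; see Lax, or Dafermos, Chapter~11.) Fixing $x_{0}$ and setting $\xi(t):=f'(\bar u)t+x_{0}$, the assertion that $u$ takes the constant value $\bar u$ along $x=\xi(t)$ then means precisely that for every $t>0$ the point $x_{0}$ minimizes
\[
\Phi_{t}(y):=U_{0}(y)+t\,f^{*}\!\Bigl(f'(\bar u)+\frac{x_{0}-y}{t}\Bigr).
\]

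The whole argument rests on a single convexity inequality. The supporting-line inequality for the convex function $f^{*}$ at the point $f'(\bar u)$, where $(f^{*})'(f'(\bar u))=\bar u$, gives
\[
t\left[f^{*}\!\Bigl(f'(\bar u)+\frac{x_{0}-y}{t}\Bigr)-f^{*}\bigl(f'(\bar u)\bigr)\right]\ \ge\ \bar u\,(x_{0}-y),
\]
hence, for all $y\in\mathbb{R}$ and all $t>0$,
\[
\Phi_{t}(y)-\Phi_{t}(x_{0})\ \ge\ \bigl(U_{0}(y)-U_{0}(x_{0})\bigr)-\bar u\,(y-x_{0})\ =\ \int_{x_{0}}^{y}\bigl(u_{0}(z)-\bar u\bigr)\,dz.
\]
So if $\int_{x_{0}}^{y}(u_{0}-\bar u)\,dz\ge 0$ for every $y$, then $\Phi_{t}(y)\ge\Phi_{t}(x_{0})$ for every $y$ and every $t>0$, i.e. $x_{0}$ minimizes $\Phi_{t}$; this is the ``if'' direction. (In fact strict convexity of $f^{*}$ makes the supporting-line inequality strict for $y\ne x_{0}$, so $x_{0}$ is the \emph{unique} minimizer and $u$ is actually continuous with value $\bar u$ along the line.) For the converse, suppose $x_{0}$ minimizes $\Phi_{t}$ for every $t>0$, so that
\[
U_{0}(y)-U_{0}(x_{0})+t\left[f^{*}\!\Bigl(f'(\bar u)+\frac{x_{0}-y}{t}\Bigr)-f^{*}\bigl(f'(\bar u)\bigr)\right]\ \ge\ 0\qquad\text{for all }t>0.
\]
Since $f^{*}\in C^{2}$, a Taylor expansion at $f'(\bar u)$ gives $t\bigl[f^{*}(f'(\bar u)+\tfrac{x_{0}-y}{t})-f^{*}(f'(\bar u))\bigr]=\bar u(x_{0}-y)+O(1/t)$ as $t\to\infty$; letting $t\to\infty$ in the displayed inequality yields $U_{0}(y)-U_{0}(x_{0})-\bar u(y-x_{0})\ge 0$, that is $\int_{x_{0}}^{y}(u_{0}(z)-\bar u)\,dz\ge 0$ for every $y$. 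This closes the equivalence.

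The point I expect to require the most care is not the convexity computation but the bookkeeping behind the Lax--Oleinik formula for data that are merely in $L^{\infty}(\mathbb{R})$ and need not decay: one must check that $\Phi_{x,t}$ attains its infimum (equivalently $\Phi_{x,t}(y)\to+\infty$ as $|y|\to\infty$), which follows from the a priori bound $\|u(\cdot,t)\|_{\infty}\le\|u_{0}\|_{\infty}=:M$ forcing the relevant slopes $(x-y)/t$ into the compact interval $[f'(-M),f'(M)]$ on which $f^{*}$ is finite and $C^{2}$; and, for the ``if'' direction, that the hypothesis $\int_{x_{0}}^{y}(u_{0}-\bar u)\,dz\ge 0$ (taking $y\to\pm\infty$) forces $|\bar u|\le M$, so that $f'(\bar u)$ lies in that same interval. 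Once these routine points are settled, and once ``$u$ takes the constant value $\bar u$ along the line'' is read as above (equivalently: $\xi$ is a divide in the sense of Dafermos), the proof is exactly the two short limiting arguments given.
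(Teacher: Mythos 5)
Your argument is correct, but note that the paper offers no proof of this lemma at all: it is quoted verbatim from Dafermos (cited as Proposition 3.2 of \cite{Dafe1}), where it is established within the framework of generalized characteristics and divides --- roughly, by showing that the line $x=f'(\ovl{u})t+x_0$ is a divide precisely when $x_0$ is, for every $t$, the foot of an extreme backward characteristic, which is then tied to the sign condition on $\int_{x_0}^{x}(u_0-\ovl{u})\,dy$. Your route through the Lax--Oleinik variational formula is a legitimate and self-contained alternative: the identification of ``$u$ takes the value $\ovl{u}$ at $(\xi(t),t)$'' with ``$x_0$ minimizes $\Phi_t$'' is the standard dictionary between minimizers and one-sided limits, the supporting-line inequality for $f^{*}$ at $f'(\ovl{u})$ gives the ``if'' direction in one line (and, as you observe, strict convexity even yields that $x_0$ is the \emph{unique} minimizer, so $u$ is continuous with value $\ovl{u}$ on the line --- slightly more than the statement asks), and the $t\to\infty$ Taylor expansion gives the converse. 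The technical caveats you flag (coercivity of $\Phi_{x,t}$ for $L^\infty$ data, confinement of the relevant slopes to $[f'(-M),f'(M)]$, and $|\ovl{u}|\le M$) are exactly the right ones and are routine. What the variational proof buys is independence from the generalized-characteristics machinery; what Dafermos's approach buys is that it extends to settings where a Hopf--Lax representation is less convenient and it is the language in which the rest of this paper (forward characteristics, \cref{Lem-Dafe}) is written. No gap.
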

Such a straight line is a characteristic and it is called a divide (\cite[Definition 10.3.3]{Dafe}).


\begin{Lem}\cite[Theorem 11.8.1]{Dafe}\label{Lem-Dafe}
	Let $u$ and $\tilde{u}$ be entropy solutions to \eqref{equ1} with the respective $ L^\infty $ initial data $u_0$ and $ \tilde{u}_0, $ satisfying 
	$$
	u_0(x)\leq \tilde{u}_0(x) \quad \text{for all~}x\in (y,\tilde{y}).
	$$
	Let $\psi(\cdot)$ be a forward characteristic, associated with the solution $u$, issuing from the point $(y,0)$, and $\tilde{\psi}(\cdot)$ be a forward characteristic, associated with the solution $\ovl{u}$, issuing from the point $(\tilde{y},0)$. Then for any $t>0$ with $\psi(t)<\tilde{\psi}(t)$,
	$$
	u(x,t)\leq \tilde{u}(x,t) \quad \forall x\in (\psi(t),\tilde{\psi}(t)).
	$$
\end{Lem}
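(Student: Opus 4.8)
The plan is to follow the standard route through backward generalized characteristics and the Lax--Oleinik variational formula, taking for granted the structure theory of generalized characteristics for genuinely nonlinear scalar conservation laws (Dafermos, Chapters~10--11) referenced above. We may assume $y<\tilde y$, the hypothesis being vacuous otherwise. First I would fix $\bar t>0$ and reduce the claim to showing $u(\bar x,\bar t)\le\tilde u(\bar x,\bar t)$ at every point $\bar x\in(\psi(\bar t),\tilde\psi(\bar t))$ which is a point of continuity of both $u(\cdot,\bar t)$ and $\tilde u(\cdot,\bar t)$. Since each solution has at most countably many jumps along $t=\bar t$, such points are dense there, and the inequality at the remaining points (hence for a.e.\ $x$) follows by passing to one-sided limits and using the entropy condition~\cref{entropy}.

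Then I would argue by contradiction, assuming $u(\bar x,\bar t)>\tilde u(\bar x,\bar t)$ at such a point. Let $\xi(\cdot)$ be the backward characteristic of $u$ through $(\bar x,\bar t)$ and $\tilde\xi(\cdot)$ the backward characteristic of $\tilde u$ through the same point; at a point of continuity each is a single straight line on $[0,\bar t]$, with slopes $f'(u(\bar x,\bar t))$ and $f'(\tilde u(\bar x,\bar t))$ respectively. Strict convexity of $f$ and the assumed inequality make $\xi$ steeper than $\tilde\xi$, so, since the two lines meet at $(\bar x,\bar t)$, we get $\xi(t)<\tilde\xi(t)$ for $t\in[0,\bar t)$ and in particular $\xi(0)<\tilde\xi(0)$. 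I would then invoke the ordering properties of generalized characteristics of a single solution (two forward characteristics issuing from distinct base points never cross, and an extreme backward characteristic lies on a forward characteristic): this forces $\psi(t)\le\xi(t)$ on $[0,\bar t]$ because $\psi(\bar t)<\bar x=\xi(\bar t)$, hence $y=\psi(0)\le\xi(0)$; symmetrically $\tilde\xi(0)\le\tilde\psi(0)=\tilde y$. Thus $[\xi(0),\tilde\xi(0)]\subseteq[y,\tilde y]$, so $u_0\le\tilde u_0$ a.e.\ on $(\xi(0),\tilde\xi(0))$.

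The closing step is the variational contradiction. Writing $f^{*}$ for the Legendre transform of $f$ and setting $U_0(z):=\int_0^z u_0(\eta)\,d\eta$ and $\tilde U_0(z):=\int_0^z\tilde u_0(\eta)\,d\eta$, I would use that $\xi(0)$ is the \emph{unique} minimizer over $z\in\R$ of $G(z):=U_0(z)+\bar t\, f^{*}\big((\bar x-z)/\bar t\big)$ and $\tilde\xi(0)$ is the unique minimizer of $\tilde G(z):=\tilde U_0(z)+\bar t\, f^{*}\big((\bar x-z)/\bar t\big)$; uniqueness here is exactly continuity of the respective solution at $(\bar x,\bar t)$, and these base points are precisely those of the corresponding backward characteristics. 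Adding the two minimality inequalities $G(\tilde\xi(0))\ge G(\xi(0))$ and $\tilde G(\xi(0))\ge\tilde G(\tilde\xi(0))$ and cancelling the common terms $\bar t f^{*}\big((\bar x-\xi(0))/\bar t\big)$ and $\bar t f^{*}\big((\bar x-\tilde\xi(0))/\bar t\big)$ gives $(\tilde U_0-U_0)(\xi(0))\ge(\tilde U_0-U_0)(\tilde\xi(0))$. But $\tilde U_0-U_0$ is non-decreasing on $[y,\tilde y]$ because its derivative $\tilde u_0-u_0$ is nonnegative there, so $(\tilde U_0-U_0)(\xi(0))\le(\tilde U_0-U_0)(\tilde\xi(0))$ as well. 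Hence equality holds throughout, in particular $G(\tilde\xi(0))=\min_{\R}G$, so $\tilde\xi(0)$ is a second minimizer of $G$; since $\xi(0)<\tilde\xi(0)$ this contradicts uniqueness of the minimizer of $G$, and the proof is complete.

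I expect the genuinely delicate part to be the second paragraph: correctly tying the two different solutions together at the common point $(\bar x,\bar t)$ through their respective extreme backward characteristics and invoking precisely the right non-crossing/ordering facts to guarantee that both base points $\xi(0)$ and $\tilde\xi(0)$ land inside $(y,\tilde y)$, where the hypothesis $u_0\le\tilde u_0$ is usable. By contrast the reduction to continuity points and the final variational computation are routine once the Lax--Oleinik characterization of backward characteristics is in hand.
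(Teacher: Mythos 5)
The paper gives no proof of this lemma: it is quoted as Theorem~11.8.1 of Dafermos's monograph, so there is no in-paper argument to measure yours against. On its own terms your proof is correct, and it is essentially the standard characteristic-variational argument. Two points deserve to be made precise. First, in your second paragraph the fact you actually need is not ``two forward characteristics never cross'' but Dafermos's Theorem~10.3.2: the forward characteristic $\psi$ restricted to $[0,\bar t]$ is a generalized characteristic of $u$ ending at $(\psi(\bar t),\bar t)$, hence is confined below the maximal backward characteristic of $u$ through that point, which in turn lies below $\xi$ because extremal backward characteristics of a single solution emanating from distinct points at time $\bar t$ cannot cross; this yields $y=\psi(0)\le\xi(0)$ and, symmetrically, $\tilde\xi(0)\le\tilde y$, which is exactly the inclusion your argument requires. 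Second, the paper assumes only $f\in C^{2}$ with $f''>0$ and no superlinear growth, so the Legendre transform $f^{*}$ may be infinite outside a bounded interval; before invoking the Lax--Oleinik identification of $\xi(0)$ and $\tilde\xi(0)$ as the unique minimizers of $G$ and $\tilde G$, you should restrict to the compact range of slopes permitted by the $L^{\infty}$ bounds on the data, or equivalently modify $f$ outside $[-M,M]$, which alters neither solution. With these two points spelled out, the reduction to common continuity points, the strict ordering $\xi(0)<\tilde\xi(0)$, the inclusion $(\xi(0),\tilde\xi(0))\subseteq(y,\tilde y)$, and the minimality-exchange contradiction are all sound, and your proof is a faithful reconstruction of the cited result rather than a genuinely different route.
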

\vspace{0.5cm}

Now we first show a comparison principle for the problem \cref{equ1},\cref{ic0}; see \cref{Lem-compare}. And  
two important propositions Propositions \ref{Prop-gluing} and \ref{Prop-invariant} can be proved by using the properties in \cref{Lem-compare}. 

For the initial data \cref{ic1}, one can choose $ z_l \in [0,p_l) $ and $ z_r \in [0,p_r) $ such that 
\begin{equation}\label{points-min}
\int_{0}^{z_l} w_{0l}(x) dx = \min_{x\in \R} \int_{0}^{x} w_{0l}(x) dx, \quad \int_{0}^{z_r} w_{0r}(x) dx = \min_{x\in \R} \int_{0}^{x} w_{0r}(x) dx.
\end{equation}
Hence, for all $ x\in\R, $
\begin{equation}\label{non-negative-1}
\int_{z_l}^{x} w_{0l}(y) dy \geq 0 \quad \text{ and } \quad \int_{z_r}^{x} w_{0r}(y) dy \geq 0.
\end{equation}
Then for each integer $ k\geq 0, $ we define
\begin{equation}\label{Def-gamma}
\Gamma_l^k(t) := z_l-kp_l + f'(\ovlul) t \quad \text{ and } \quad \Gamma_r^k(t) := z_r + kp_r + f'(\ovlur) t.
\end{equation}
It follows from \cref{Lem-div} that $ \Gamma_l^k $ and $ \Gamma_r^k $ are classical characteristics of $ u_l $ and $ u_r, $ respectively, and there hold that
\begin{align}
u_l(\Gamma_l^k(t),t) \equiv \ovlul &, \quad u_r(\Gamma_r^k(t),t) \equiv \ovlur \quad &\forall t\geq 0; \label{constant-div} \\
\int_{\Gamma_l^k(t)}^{x} (u_l(y,t)-\ovlul) dy \geq 0 &, \quad \int_{\Gamma_r^k(t)}^{x} (u_r(y,t)-\ovlur) dy \geq 0 \quad &\forall x\in\R, t\geq 0. \label{non-negative-2}
\end{align}

Now, let $ K>0 $ be a fixed large number, such that
\begin{equation}
\Gamma_l^K(0)=z_l-Kp_l \leq -N \quad \text{ and } \quad \Gamma_r^K(0)=z_r+Kp_r \geq N,
\end{equation}
and then denote
\begin{align*}
X_1(t): ~& \text{a forward characteristic of $u$, issuing from $(\Gamma_l^K(0),0),$ } \\
X_2(t): ~& \text{a forward characteristic of $u$, issuing from $(\Gamma_r^K(0),0)$ }.
\end{align*}

\begin{Lem}\label{Lem-compare}
	With the notations introduced above, there hold that
	\begin{enumerate}
		\item $u_l(x,t)\leq u(x,t)$ if $x<X_1(t)$; $u_l(x,t)\geq u(x,t)$ if $x< \Gamma_l^{K}(t)$;
		\item $u(x,t)\leq u_r(x,t)$ if $x>X_2(t)$; $u(x,t)\geq u_r(x,t)$ if $x>\Gamma_r^{K}(t)$.		
	\end{enumerate} 
\end{Lem}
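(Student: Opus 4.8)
The plan is to deduce \cref{Lem-compare} from the comparison principle \cref{Lem-Dafe} by splitting the initial data appropriately and tracking forward characteristics. Consider statement (1). To obtain $u_l(x,t)\le u(x,t)$ for $x<X_1(t)$, I would introduce an auxiliary entropy solution $\tilde u$ with initial data that agrees with $u_l(\cdot,0)=\ovlul+w_{0l}$ to the left of $\Gamma_l^K(0)=z_l-Kp_l$ and agrees with $u_0$ to the right of it; by the choice of $K$ we have $z_l-Kp_l\le -N$, so on $(-\infty,z_l-Kp_l)$ the data $u_0$ and $u_l(\cdot,0)$ literally coincide, hence $\tilde u\equiv u$ by finite speed of propagation to the left of the forward characteristic of $u$ issuing from $(\Gamma_l^K(0),0)$, which is exactly $X_1(t)$. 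On the other hand, comparing the data of $u_l$ and $\tilde u$: they agree on $(-\infty,\Gamma_l^K(0))$, so $u_l(x,0)\le\tilde u_0(x)$ there trivially; then applying \cref{Lem-Dafe} with $y=\tilde y=\Gamma_l^K(0)$ — more precisely, taking the forward characteristic $\psi$ of $u_l$ issuing from $\Gamma_l^K(0)$, which is the divide $\Gamma_l^K(t)$ by \cref{Lem-div,constant-div}, and the forward characteristic $\tilde\psi=X_1$ of $\tilde u=u$ issuing from the same point — yields $u_l(x,t)\le u(x,t)=\tilde u(x,t)$ for $x$ between $\Gamma_l^K(t)$ and $X_1(t)$. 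Combined with the region $x<\Gamma_l^K(t)$ handled below, this covers all $x<X_1(t)$.

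For the second half of (1), $u_l(x,t)\ge u(x,t)$ when $x<\Gamma_l^K(t)$: here I would compare $u$ (data $u_0$) with $u_l$ (data $\ovlul+w_{0l}$) directly. On $(-\infty,\Gamma_l^K(0))$ the two data coincide, and I want to run \cref{Lem-Dafe} in the reversed direction, i.e. with the roles swapped so that the inequality $u_0\le u_l(\cdot,0)$ needs to hold on the relevant interval; but the data are only equal there, not ordered, which is fine since equality gives both inequalities. The key point is that $\Gamma_l^K(t)$ is itself the forward characteristic of $u_l$ from $(\Gamma_l^K(0),0)$ (a straight-line divide), and the forward characteristic of $u$ from the same point lies to its right for $t>0$ — this is where I expect to invoke the entropy/ordering structure: since $u_0\ge u_l(\cdot,0)$ fails to one side, one instead argues that to the left of the $u$-characteristic from $\Gamma_l^K(0)$ the solution $u$ only sees data $\le$ (resp. $\ge$) what $u_l$ sees, so \cref{Lem-Dafe} applies on that common region. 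Statement (2) is entirely symmetric: replace left by right, $\ovlul$ by $\ovlur$, $\Gamma_l^K$ by $\Gamma_r^K$, $X_1$ by $X_2$, and reverse the inequalities according to whether one is to the right of $X_2$ or to the right of $\Gamma_r^K$.

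The main obstacle, I expect, is bookkeeping the correct orientation of each inequality in \cref{Lem-Dafe}: the lemma requires the data inequality to hold on an open interval $(y,\tilde y)$ and concludes an ordering of the solutions between the two issuing forward characteristics, with $\psi(t)<\tilde\psi(t)$. One must carefully match which solution plays the role of the "smaller data" one, check that its forward characteristic is the one on the left (here always the straight divide $\Gamma_l^K$ or $\Gamma_r^K$, by \cref{Lem-div}), and verify the ordering $\psi(t)<\tilde\psi(t)$ of the two characteristics for all relevant $t$ — which follows because the forward characteristic of $u$ issuing from a divide of $u_l$ (or $u_r$) cannot cross that divide, again by the comparison principle applied to the common data region. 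The regions $\{x<X_1(t)\}\cap\{x<\Gamma_l^K(t)\}$ versus $\{\Gamma_l^K(t)<x<X_1(t)\}$ must be treated in the two sub-arguments respectively, and one should note $\Gamma_l^K(t)$ and $X_1(t)$ need not be ordered in a fixed way a priori — but in each sub-statement only one of them is the relevant barrier, so no contradiction arises. Everything else is a routine application of finite propagation speed and the already-cited properties \cref{constant-div,non-negative-2} of the divides.
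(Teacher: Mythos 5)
Your overall strategy --- comparing $u$ with $u_l$ via \cref{Lem-Dafe} and exploiting that their initial data coincide on $(-\infty,\Gamma_l^K(0))$ because $\Gamma_l^K(0)\leq -N$ --- is the right one, and the auxiliary solution $\tilde u$ is harmless but vacuous, since $\tilde u_0=u_0$ everywhere. However, there is a genuine gap in your proof of the first half of (1). Invoking \cref{Lem-Dafe} with $y=\tilde y=\Gamma_l^K(0)$ is a degenerate application (the comparison interval $(y,\tilde y)$ is empty), and even granting it, it only yields $u_l\leq u$ on the strip $(\Gamma_l^K(t),X_1(t))$, whereas the claim is $u_l\leq u$ on all of $(-\infty,X_1(t))$. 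You assert that the remaining region $\{x<\Gamma_l^K(t)\}\cap\{x<X_1(t)\}$ is ``handled below,'' but the sub-argument below establishes the \emph{opposite} inequality $u\leq u_l$ there, so it cannot close this case. Your second sub-argument also leans on the unproven (and unnecessary) claim that the forward characteristic of $u$ issuing from $(\Gamma_l^K(0),0)$ stays to the right of the divide $\Gamma_l^K(t)$, and again only controls points trapped between two curves issuing from the single point $\Gamma_l^K(0)$, not all $x<\Gamma_l^K(t)$.

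The missing idea, which is exactly how the paper proceeds, is to let the left endpoint of the comparison interval recede to $-\infty$ depending on the target point. Fix $(x,t)$ with $x<X_1(t)$. Since $u_l$ is bounded, characteristic speeds are bounded, so one can choose $\psi(0)<\Gamma_l^K(0)$ so far to the left that the forward characteristic $\psi$ of $u_l$ issuing from $(\psi(0),0)$ satisfies $\psi(t)<x$. On the nonempty interval $(\psi(0),\Gamma_l^K(0))$ the data of $u_l$ and $u$ literally coincide, so \cref{Lem-Dafe} applies with $y=\psi(0)$, $\tilde y=\Gamma_l^K(0)$ and $\tilde\psi=X_1$, giving $u_l\leq u$ on $(\psi(t),X_1(t))\ni x$. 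The second half of (1) is the same argument with the roles of $u$ and $u_l$ exchanged: now $\psi$ is a forward characteristic of $u$ from a far-left point chosen so that $\psi(t)<x$, and $\tilde\psi=\Gamma_l^K$ is the forward characteristic (divide) of $u_l$ from $(\Gamma_l^K(0),0)$, yielding $u\leq u_l$ on $(\psi(t),\Gamma_l^K(t))$. Without this far-left anchoring your proof covers only part of each claimed region.
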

%
\begin{proof}
	We prove only (1), since the proof of (2) is similar.
	For any fixed $(x,t)$ satisfying $x<X_1(t)$, since $u_l$ is bounded, by the finite propagating speed, one can choose $\psi(0)\in (-\infty,\Gamma_l^K(0))$ small enough, such that a forward characteristic $\psi(t)$ of $u_l$, issuing from $(\psi(0),0)$ satisfies $\psi(t)<x; $ see \cref{Fig-forwards}.
	\begin{figure}[htbp!]
		\includegraphics[width=0.45\textwidth]{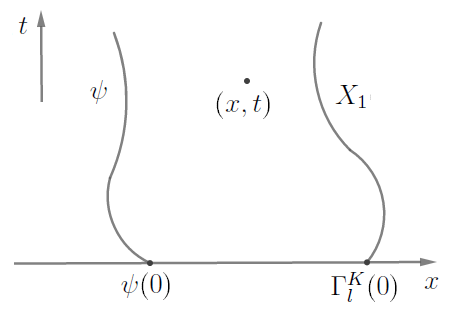} 
		\caption{}\label{Fig-forwards}
	\end{figure}
	It is noted that $u_l(x,0) = u(x,0)$ on $(\psi(0),\Gamma_l^K(0))$ and  $\psi(t)<x<X_1(t)$, then \Cref{Lem-Dafe} implies that $u_l(x,t)\leq u(x,t)$.
	And if $ x< \Gamma_l^K(t), $ it is similar to prove that $ u(x,t) \leq u_l(x,t). $ 
\end{proof}

Now, for any $ t\geq 0, $ we define two Lipschitz continuous curves:
\begin{equation}\label{Def-curves}
X_1^*(t):=\min\{X_1(t), \Gamma_l^K(t) \} \quad \text{ and } \quad X_2^*(t):= \max\{X_2(t), \Gamma_r^K(t)\}.
\end{equation}
Then the following proposition follows from \cref{Lem-compare}.

\begin{Prop}\label{Prop-gluing}
	The unique entropy solution $ u(x,t) $ to \cref{equ1},\cref{ic1} satisfies
	\begin{equation}\label{eq-out}
	u(x,t) = \begin{cases}
	u_l(x,t) \quad \text{ if } x<X_1^*(t), \\
	u_r(x,t) \quad \text{ if } x>X_2^*(t).
	\end{cases}
	\end{equation}
\end{Prop}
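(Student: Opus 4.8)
The plan is to deduce \cref{Prop-gluing} from the four one-sided inequalities in \cref{Lem-compare} by distinguishing the two cases in the definition \cref{Def-curves} of $X_1^*$ and $X_2^*$. I will only discuss the region $x<X_1^*(t)$, the other region being symmetric. By definition $X_1^*(t)=\min\{X_1(t),\Gamma_l^K(t)\}$, so fixing $(x,t)$ with $x<X_1^*(t)$ means that \emph{both} $x<X_1(t)$ and $x<\Gamma_l^K(t)$ hold simultaneously. The first inequality then gives $u_l(x,t)\le u(x,t)$ from part (1) of \cref{Lem-compare}, while the second half of part (1) gives $u_l(x,t)\ge u(x,t)$. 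Combining the two yields $u(x,t)=u_l(x,t)$, which is exactly the first line of \cref{eq-out}.

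For the region $x>X_2^*(t)=\max\{X_2(t),\Gamma_r^K(t)\}$ I would argue identically: such an $x$ satisfies both $x>X_2(t)$ and $x>\Gamma_r^K(t)$, so part (2) of \cref{Lem-compare} gives both $u(x,t)\le u_r(x,t)$ and $u(x,t)\ge u_r(x,t)$, hence $u(x,t)=u_r(x,t)$. The uniqueness of the entropy solution $u$ to \cref{equ1},\cref{ic1} has already been recalled after \cref{entropy}, so no additional well-posedness input is needed; the statement is simply an identification of $u$ with the respective periodic solutions $u_l$ and $u_r$ on the indicated regions.

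There is essentially no obstacle here: the work has all been done in establishing \cref{Lem-compare}, and in particular in setting up the forward characteristics $X_1(t),X_2(t)$ from the outermost divides $\Gamma_l^K(0),\Gamma_r^K(0)$ of $u_l,u_r$ and invoking the comparison principle \cref{Lem-Dafe}. The only point worth stating carefully is that the curves $X_1^*,X_2^*$ are well defined and Lipschitz continuous: $\Gamma_l^K$ and $\Gamma_r^K$ are affine in $t$ by \cref{Def-gamma}, and $X_1,X_2$ are forward characteristics of the bounded entropy solution $u$, hence Lipschitz with slope between $\inf f'$ and $\sup f'$ on the relevant range of values; the minimum (resp.\ maximum) of two Lipschitz functions is Lipschitz. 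I would therefore present the proof in three short sentences—one recalling that $x<X_1^*(t)$ forces both defining inequalities, one applying each half of \cref{Lem-compare}(1), and one doing the symmetric argument for $X_2^*$—and conclude.
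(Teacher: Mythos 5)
Your argument is correct and is exactly the intended one: the paper itself states only that the proposition ``follows from Lemma~\ref{Lem-compare}'', and your case analysis --- $x<X_1^*(t)$ forces both $x<X_1(t)$ and $x<\Gamma_l^K(t)$, so the two halves of Lemma~\ref{Lem-compare}(1) squeeze $u=u_l$ there, and symmetrically for $X_2^*$ --- is precisely how it follows. No gap; the remarks on Lipschitz continuity of $X_1^*,X_2^*$ are a harmless bonus.
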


\begin{Rem}
	Note that $X_1^*$ and $X_2^*$ may not be the generalized characteristics of $u$. 
    And we will show in the subsequent sections that these carefully chosen curves $X_1^*$ and $X_2^*$ 
    can be ``well-controlled'' at large time.
    It is the large time behaviors of  $X_1^*$ and $X_2^*$, together with \cref{Prop-gluing}, that reveal the main results in this paper.
\end{Rem}


\vspace{0.5cm}

\section{Stability of shock waves}

In this section, the large time behaviors of  $X_1^*$ and $X_2^*$ are studied in the case for $\ovlul>\ovlur$, and then stability of shock waves under two different perturbations at infinities are proved.

\begin{Lem}\label{Lem-shock}
	There exists a finite time $ T_S>0, $ such that 
	$ X_1^*(t) = X_2^*(t) $ for all $ t\geq T_S. $
\end{Lem}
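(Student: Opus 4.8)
The plan is to show that the ``transient strip'' $\mathcal{T}(t):=\bigl(X_1^*(t),X_2^*(t)\bigr)$, on which, by \cref{Prop-gluing}, $u$ coincides with neither $u_l$ nor $u_r$, becomes empty once $t$ is large. This is the scalar analogue of the finite‑time absorption of a compactly supported perturbation by a shock, established for constant far‑field data in \cite{Liu1978}; the new difficulty is that here the far‑field states $\ovlul,\ovlur$ are attained only asymptotically, at the rate $1/t$ of \cref{Lem-per}, so the characteristic argument has to be run against the divides $\Gamma_l^k$, $\Gamma_r^k$ (on which $u_l\equiv\ovlul$, $u_r\equiv\ovlur$) rather than against straight constant characteristics.

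First I would control the speeds of the two edges of $\mathcal T$. Since $X_1$ is a forward generalized characteristic of $u$, $\dot X_1(t)\le f'\!\bigl(u(X_1(t)-,t)\bigr)$ for a.e.\ $t$; when $X_1(t)<\Gamma_l^K(t)$, \cref{Prop-gluing} identifies $u(X_1(t)-,t)$ with $u_l(X_1(t)-,t)$, which by \cref{Lem-per} equals $\ovlul+O(1/t)$. Writing $v_1(t):=u(X_1^*(t)+,t)$ and $v_2(t):=u(X_2^*(t)-,t)$ for the values of $u$ just inside $\mathcal T$, this estimate and its mirror image give, for a.e.\ large $t$,
\[
\dot X_1^*(t)\ \ge\ \sigma\bigl(\ovlul,v_1(t)\bigr)-\tfrac{C}{t},\qquad \dot X_2^*(t)\ \le\ \sigma\bigl(v_2(t),\ovlur\bigr)+\tfrac{C}{t},
\]
using that whenever an edge is a shock its outer state is $\ovlul+O(1/t)$ resp.\ $\ovlur+O(1/t)$, and that $\sigma$ is nondecreasing in each argument, so that the continuous and divide cases only help. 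Moreover Oleinik's inequality \eqref{entropy} forbids upward jumps of $u$, so $v_1(t)\le\ovlul+O(1/t)$, $v_2(t)\ge\ovlur-O(1/t)$, while inside $\mathcal T$ the same inequality yields $v_2(t)-v_1(t)\le E\,|\mathcal T(t)|/t$ and, more generally, $\ovlur-O(1/t)-E|\mathcal T(t)|/t\le u\le\ovlul+O(1/t)+E|\mathcal T(t)|/t$ on $\mathcal T(t)$.

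The crucial remaining ingredient, and the main obstacle, is the bound $|\mathcal T(t)|=o(t)$. I would obtain it from the comparison principle \cref{Lem-compare} together with a conserved ``mass''-type functional which forces both $X_1^*(t)$ and $X_2^*(t)$ to remain within $O(1)$ of the background shock $\chi(t)$ — the forward generalized characteristic of $u$ through an interior point of $\mathcal T$, whose speed tends to $s\in\bigl(f'(\ovlur),f'(\ovlul)\bigr)$. This is exactly the point at which the analysis becomes more delicate than in \cite{Liu1978}, since the conserved mass must be taken relative to the periodic profiles $u_l,u_r$ instead of constants, which is where the divide structure of $u_l,u_r$ and the comparisons of \cref{Lem-compare,Lem-Dafe} enter.

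Granting $|\mathcal T(t)|=o(t)$, the estimates above give $v_1(t),v_2(t)\in[\ovlur-o(1),\ovlul+o(1)]$ and $v_2(t)\le v_1(t)+o(1)$, hence
\[
\dot X_1^*(t)-\dot X_2^*(t)\ \ge\ \sigma\bigl(\ovlul,v_1(t)\bigr)-\sigma\bigl(v_1(t),\ovlur\bigr)-o(1)\ \ge\ \delta_0-o(1),\qquad \delta_0:=\min_{\ovlur\le v\le\ovlul}\bigl[\sigma(\ovlul,v)-\sigma(v,\ovlur)\bigr],
\]
and $\delta_0>0$ because $f$ is strictly convex (the bracket is an average of $f'$ over $[v,\ovlul]$ minus an average over $[\ovlur,v]$, positive for every $v\in[\ovlur,\ovlul]$, with endpoint values $f'(\ovlul)-s$ and $s-f'(\ovlur)$). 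Thus $\frac{d}{dt}|\mathcal T(t)|\le-\delta_0/2$ for all large $t$, so $|\mathcal T(t)|$ reaches $0$ at some finite $T_S$; and once $X_1^*(T_S)=X_2^*(T_S)$ the two curves cannot separate again — where they coincide with forward characteristics of $u$ this is the uniqueness of forward generalized characteristics for $t>0$, and the remaining cases follow from the same speed comparison — whence $X_1^*(t)=X_2^*(t)$ for all $t\ge T_S$.
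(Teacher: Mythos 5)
Your overall architecture (show that the transient strip $\mathcal{T}(t)=(X_1^*(t),X_2^*(t))$ closes in finite time, using that the outer traces equal $\ovlul+O(1/t)$, $\ovlur+O(1/t)$ by \cref{Prop-gluing} and \cref{Lem-per}) is the same as the paper's, and the conditional part of your argument is sound. But the step you yourself flag as ``the main obstacle'' --- the a priori bound $|\mathcal{T}(t)|=o(t)$ --- is a genuine gap, and the route you sketch for it does not obviously close. A mass balance between the divides $\Gamma_l^{K_1}$ and $\Gamma_r^{K_1}$ yields only an identity of the form $(\ovlul-\ovlur)X_1^*(t)+\int_{X_1^*(t)}^{X_2^*(t)}(u-\ovlur)\,dy=(\ovlul-\ovlur)st+O(1)$, in which the strip integral is an unknown of exactly the same order as the length you are trying to bound, so it does not pin $X_1^*,X_2^*$ within $O(1)$ of a shock; indeed the paper extracts the shift $X_\infty$ from this very computation only \emph{after} the lemma is proved, and the assertion that the interior forward characteristic ``has speed tending to $s$'' is essentially equivalent to what is being proved. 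Likewise, feeding only Oleinik's inequality \cref{entropy} into your speed estimate gives $v_2-v_1\le E|\mathcal{T}(t)|/t$ with a constant $E$ that need not be small, so the resulting inequality $\frac{d}{dt}|\mathcal{T}|\le-\delta_0+C\,E\,|\mathcal{T}|/t$ has homogeneous solutions growing like $t^{CE}$, which defeat the favorable constant forcing unless $CE<1$.

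The paper's proof needs no a priori sublinear bound on the strip. Writing $D(t)=X_2(t)-X_1(t)$, it interpolates $\sigma(a,b)=\theta(a,b)f'(a)+(1-\theta(a,b))f'(b)$ with $\theta(a,b)\le\ovl{\theta}(\e)<1$ uniformly on the relevant compact set, and controls the \emph{inner} traces by the exact backward-characteristic identity $f'(u(X_i\pm,t))=(X_i(t)-x_i^0)/t$ with $x_1^0\le x_2^0$ lying in a fixed bounded interval (rather than by the Oleinik constant). This gives
\[
D'(t)\ \le\ \frac{\ovl{\theta}\,D(t)}{t}+\frac{1-\ovl{\theta}}{2}\,\bigl[f'(\ovlur)-f'(\ovlul)\bigr],
\]
and Gronwall then yields $D(t)\le Ct^{\ovl{\theta}}+\tfrac12[f'(\ovlur)-f'(\ovlul)]\,t$; since $\ovl{\theta}<1$, the homogeneous part is automatically $o(t)$ and $D$ must vanish in finite time. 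In other words, the sublinearity you need is an \emph{output} of the coefficient $\ovl{\theta}<1$, not an input. To complete your write-up you would have to either import this interpolation device or supply an independent proof of $|\mathcal{T}(t)|=o(t)$; the mixed cases $X_1\gtrless\Gamma_l^K$, $X_2\gtrless\Gamma_r^K$, which the paper treats separately, require the same treatment.
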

\begin{proof}
It will be established in the following cases.
\begin{enumerate}
	\item[i)] $ \Gamma_l^K(t) \leq X_1(t) $ and $ \Gamma_r^K(t) \geq X_2(t): $ $\Gamma_l^K$ and $\Gamma_r^K$ coincide after some positive time, due to $ \ovlul>\ovlur. $
	
	\vspace{0.3cm}
	
	\item[ii)] $ \Gamma_l^K(t) > X_1(t) $ and $ \Gamma_r^K(t) < X_2(t): $ Define $D(t)=X_2(t)-X_1(t)$, then 
	\begin{align*}
	D'(t) & = \sigma(u(X_2(t)-,t),u(X_2(t)+,t))- \sigma(u(X_1(t)-,t),u(X_1(t)+,t)) \\
	& = \sigma(u(X_2(t)-,t),u_r(X_2(t)+,t))- \sigma(u_l(X_1(t)-,t),u(X_1(t)+,t)).
	\end{align*}
	Now we claim that there exists a finite time $ T_0>0 $ and $ \theta_0 \in (0,1), $ independent of time $ t, $ such that
	\begin{equation}\label{ineq-D}
	D'(t) \leq ~\frac{\theta_0 D(t)}{t} + \frac{1-\theta_0}{2} [ f'(\ovlur) - f'(\ovlul) ], \quad t>T_0.
	\end{equation}
	In fact, define a function 
	\begin{equation}
	h(a,b,\theta) := \sigma(a,b) - \theta f'(a) - (1-\theta) f'(b) , \quad a, b \in \R, \theta \in [0,1].
	\end{equation}
	For any $ \e>0, $ denote a compact domain
	\begin{equation}
	\Omega_\e = \left\lbrace (a,b) \in \R^2: | a- b|\geq \e \text{ and } a^2+b^2 \leq 2 M^2 \right\rbrace,
	\end{equation}
	where $ M:= \max \{ \|u\|_{L^\infty},\| u_l\|_{L^\infty},\|u_r\|_{L^\infty} \}. $ Then for each $ (a,b) \in \Omega_\e, $ the strict convexity of $ f $ implies that there exists a unique $ \theta(a,b) \in (0,1), $ such that $ h(a,b,\theta(a,b)) = 0. $ It follows from the implicit theorem and finite covering theorem that there exist two constants $ 0< \underline{\theta}(\e) \leq \ovl{\theta}(\e) <1, $ such that for each $ (a,b) \in \Omega_\e, $ $ \underline{\theta}(\e) \leq \theta(a,b) \leq \ovl{\theta}(\e). $ 
	
	Now denote $ a_1 = u(X_1(t)+,t) $ and $ b_1 = u_l(X_1(t)-,t), $ then the entropy condition yields $ a_1 \leq b_1. $ If $ (a_1,b_1) \in \Omega_\e, $ then one has that
	\begin{equation}
	\begin{aligned}
	\sigma(a_1,b_1) & = \theta(a_1,b_1) f'(a_1) + (1- \theta(a_1,b_1)) f'(b_1) \\
	& \geq \ovl{\theta}(\e) f'(a_1) + (1-\ovl{\theta}(\e)) f'(b_1).
	\end{aligned}
	\end{equation}
	And if $ (a_1,b_1) \notin \Omega_\e, $ which means $ a_1 > b_1-\e, $ then $ f'(a_1) >f'(b_1) -C\e, $ where $ C>0 $ is a constant, independent of $ \e. $ Then one has that 
	\begin{equation}
	\sigma(a_1,b_1) \geq f'(a_1) \geq \ovl{\theta}(\e) f'(a_1) + (1-\ovl{\theta}(\e)) (f'(b_1) - C\e).
	\end{equation}
	Similarly, for $ a_2 = u(X_2(t)-,t) $ and $ b_2 =  u_r(X_2(t)+,t) $ with $ a_2 \geq b_2, $ one has that if $ (a_2,b_2) \in \Omega_\e, $ 
	$$ \sigma(a_2,b_2) \leq \ovl{\theta}(\e) f'(a_2) + (1-\ovl{\theta}(\e)) f'(b_2); $$ 
	and if $ (a_2,b_2) \notin \Omega_\e, $ i.e. $ a_2 < b_2+\e, $ then
	$$ \sigma(a_2,b_2) \leq \ovl{\theta}(\e) f'(a_2) + (1-\ovl{\theta}(\e)) ( f'(b_2) + C\e). $$ 
	To conclude, one has that 
	\begin{equation}
	\begin{aligned}
	D'(t) = &~ \sigma(a_2,b_2) - \sigma(a_1,b_1) \\
	\leq &~ \ovl{\theta}(\e) (f'(a_2)-f'(a_1)) + (1-\ovl{\theta}(\e)) (f'(b_2) - f'(b_1) + 2C\e) \\
	\leq &~ \ovl{\theta}(\e) \left( \frac{X_2(t) - x_2^0}{t} - \frac{X_1(t) - x_1^0}{t}\right) \\
	& + (1-\ovl{\theta}(\e)) \left( f'(\ovlur) - f'(\ovlul) + \frac{C_1}{t} + 2C\e \right),
	\end{aligned}
	\end{equation}
	where $ x_1^0 $ (resp. $ x_2^0 $) is the intersection of the maximal (resp. minimal) backward characteristic of $ u $ emanating from $ (X_1(t),t) $ (resp. $ (X_2(t),t) $) with $ x$-axis. It is noted that $ -N \leq x_1^0 \leq x_2^0 \leq N, $ then there exist a large $ T_0>0 $ and a small $ \e_0>0, $ such that \cref{ineq-D} holds true.
	
	\vspace{0.2cm}
	
	Hence, it follows from \cref{ineq-D} that
	$$
	\frac{d}{dt} \left(t^{-\theta_0}D(t)  \right) \leq \frac{1-\theta_0}{2} t^{-\theta_0} [f'(\ovlur)-f'(\ovlul)].
	$$
	Thus, for all $ t>T_0, $
	\begin{equation}\label{ineq-D-1}
	D(t) \leq \frac{1}{2}[f'(\ovlur)-f'(\ovlul)] t + C(T_0) t^{\theta_0}.
	\end{equation}
	Since $ f'(\ovlur) < f'(\ovlul), $ \cref{ineq-D-1} implies that $ D(t) = 0 $ after some positive time $ T_1>T_0. $


	\vspace{0.3cm}
	
	\item[iii)] $ \Gamma_l^K(t) > X_1(t) $ and $ \Gamma_r^K(t) \geq X_2(t): $	Define $D_r(t)=\Gamma_r^K(t)-X_1(t)$. 
     Similar to the case ii), there exist $\theta \in (0,1) $ and $ T>0, $ such that for all $ t>T, $
	\begin{align*}
	D_r'(t) = & ~f'(\ovlur) - \sigma(u_l(X_1(t)-,t),u(X_1(t)+,t)) \\
	\leq &~\theta \left[ f'(\ovlur) - f'(u(X_1(t)+,t))\right] + \frac{1}{2}(1-\theta) [f'(\ovlur)-f'(\ovlul)] \\
	= &~\theta\left[\frac{\Gamma_r^K(t)-\Gamma_r^K(0)}{t} -\frac{X_1(t)-x_1^0}{t}\right] + \frac{1}{2}(1-\theta) [f'(\ovlur)-f'(\ovlul)] \\
	\leq &~\frac{\theta D_r(t)}{t} + \frac{1}{2}(1-\theta) [f'(\ovlur)-f'(\ovlul)] + \frac{C(N)}{t}.
	\end{align*}
	Therefore, by similar arguments to that in ii), one can obtain that $X_1$ and $X_r$ coincide after some positive time.
	
	\vspace{0.3cm}
	
	\item[iv)] $ \Gamma_l^K(t) \leq X_1(t) $ and $ \Gamma_r^K(t) < X_2(t): $ The proof is same as above.	
\end{enumerate}
\end{proof}

\begin{proof}[Proof of Theorem \ref{Thm-shock}] 
	Denote $ X(t) := X_1^*(t) = X_2^*(t) $ for $ t>T_S. $ Then due to \cref{Prop-gluing} and \cref{Lem-per}, to finish the proof of \cref{Thm-shock}, it remains to prove the convergence of $ X(t) $ as $ t\rightarrow +\infty. $
	
	For any fixed $ t>T_S, $ one can first choose $ K_1>K $ large enough, such that $ X_1^*(\tau) > \Gamma_l^{K_1}(\tau) $ and $ X_2^*(\tau) < \Gamma_r^{K_1}(\tau) $ for all $ \tau\in [0,t]; $ see \cref{integrate}.
	\begin{figure}[htbp!]
		\includegraphics[width=0.45\textwidth]{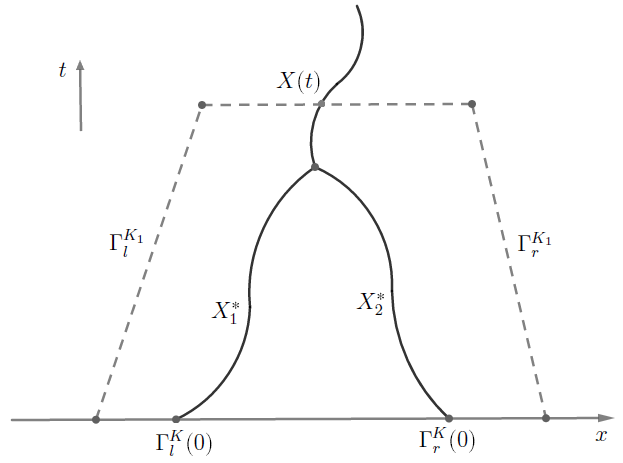}
		\caption{ }
		\label{integrate}
	\end{figure}
	Then integrating the equation \cref{equ1} over the trapezium bounded by $ x $-axis, $ \Gamma_r^{K_1}, $ the line $ \{ (x,t): x\in\R \} $ and $ \Gamma_l^{K_1} $ yields that
	\begin{equation}\label{int-1}
	\begin{aligned}
	& \int_{\Gamma_l^{K_1}(t)}^{\Gamma_r^{K_1}(t)} u(x,t) dx - \int_{\Gamma_l^{K_1}(0)}^{\Gamma_r^{K_1}(0)} u_0(x) dx \\
	= & \int_0^t \left[ f'(\ovlur) u(\Gamma_r^{K_1}(\tau),\tau) - f( u(\Gamma_r^{K_1}(\tau),\tau))\right] d\tau \\
	& - \int_0^t \left[ f'(\ovlul) u(\Gamma_l^{K_1}(\tau),\tau) - f( u(\Gamma_l^{K_1}(\tau),\tau))\right] d\tau \\
	= & ~[ f'(\ovlur) \ovlur - f(\ovlur) ] t - [ f'(\ovlul) \ovlul - f(\ovlul) ] t
	\end{aligned}
	\end{equation}
	where the second equality follows from \cref{Prop-gluing} and \cref{constant-div}. Also, \cref{Prop-gluing} implies that
	\begin{equation}\label{int-2}
	\begin{aligned}
	\int_{\Gamma_l^{K_1}(t)}^{\Gamma_r^{K_1}(t)} u(x,t) dx = & \int_{\Gamma_l^{K_1}(t)}^{X(t)} u_l(x,t) dx + \int_{X(t)}^{\Gamma_r^{K_1}(t)} u_r(x,t) dx \\
	= &~ \ovlul (X(t)-\Gamma_l^{K_1}(t)) + \ovlur (\Gamma_r^{K_1}(t) - X(t)) + O(1) \frac{p_l^2+p_r^2}{t} \\
	= &~ (\ovlul - \ovlur) X(t) - \ovlul (z_l-K_1p_l+f'(\ovlul)t) \\
	& + \ovlur (z_r+K_1p_r+f'(\ovlur)t) + \frac{O(1)}{t},
	\end{aligned}
	\end{equation}
	where the bound in $ O(1) $ depends only on $ f. $
	And \cref{zero-average} yields that
	\begin{align}
	\int_{\Gamma_l^{K_1}(0)}^{\Gamma_r^{K_1}(0)} u_0(x) dx = & \int_{\Gamma_l^{K_1}(0)}^{0} (u_0(x)-\ovlul-w_{0l}(x)) dx - \ovlul \Gamma_l^{K_1}(0) + \int_{z_l}^0 w_{0l}(x) dx \notag \\
	& + \int_0^{\Gamma_r^{K_1}(0)} (u_0(x)-\ovlur-w_{0r}(x)) dx + \ovlur \Gamma_r^{K_1}(0) + \int_0^{z_r} w_{0r}(x) dx \notag \\
	= & (\ovlul - \ovlur) X_\infty - \ovlul(z_l - K_1 p_l) + \ovlur (z_r + K_1 p_r), \label{int-3}
	\end{align}
	where \cref{points-min} is used and $ X_\infty $ is defined in \cref{shift}.
	Then it follows from \cref{int-1,int-2,int-3} that
	\begin{align*}
	(\ovlul - \ovlur) X(t) - (f(\ovlul) - f(\ovlur)) t = (\ovlul - \ovlur) X_\infty + \frac{O(1)}{t},
	\end{align*}
	which finishes the proof of \cref{Thm-shock}.

\end{proof}

\vspace{0.3cm}

\section{Stabilities of rarefaction waves and constants}

In this section, we consider the initial data \cref{ic1} where $ \ovlul \leq \ovlur. $ 
To study the large time behaviors of  $X_1^*$ and $X_2^*$, two time-invariant variables are introduced. 

For any fixed $ t\geq 0, $ we first let $ K_2=K_2(t) > K $ denote a large integer, such that $ \Gamma_l^{K_2}(\tau) < X_1^*(\tau) $ and $ \Gamma_r^{K_2}(\tau) > X_2^*(\tau) $ for all $ \tau \in [0,t]; $ see \cref{Fig-rare}.
\begin{figure}[htbp!]
	\includegraphics[width=0.45\textwidth]{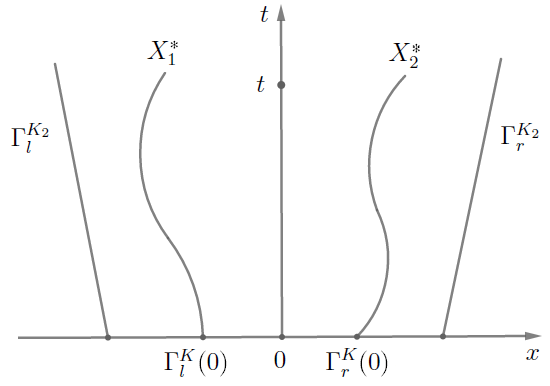}
	\caption{ }
	\label{Fig-rare}
\end{figure}

Then \cref{Prop-gluing} implies that
\begin{align*}
& \int_{\Gamma_l^k(t)}^{x} (u(y,t)-\ovlul) dy = \int_{\Gamma_l^{K_2}(t)}^{x} (u(y,t)-\ovlul) dy \quad \forall k\geq K_2; \\
& \int_{x}^{\Gamma_r^k(t)} (u(y,t)-\ovlur) dy = \int_{x}^{\Gamma_r^{K_2}(t)} (u(y,t)-\ovlur) dy \quad \forall k\geq K_2.
\end{align*}
Moreover, when $ x<\Gamma_l^{K_2}(t) $ (or $ x>\Gamma_r^{K_2}(\tau) $), \cref{Prop-gluing} and \cref{non-negative-1} yield that the integral 
\begin{align*}
& \int_{\Gamma_l^{K_2}(t)}^{x} (u(y,t)-\ovlul) dy = \int_{\Gamma_l^{K_2}(t)}^{x} (u_l(y,t)-\ovlul) dy \geq 0 \\
\Big(\text{or } & \int_{x}^{\Gamma_r^k(t)} (u(y,t)-\ovlur) dy = \int_{x}^{\Gamma_r^k(t)} (u_r(y,t)-\ovlur) dy \leq 0 \Big) 
\end{align*}
is bounded and periodic with respect to $ x. $
Hence, for each $ t\geq 0, $ we can well define
\begin{equation}\label{Def-Inva}
\begin{aligned}
P(t) & := \lim\limits_{k\rightarrow +\infty} \min_{x\in \R} \int_{\Gamma_l^k(t)}^{x} (u(y,t)-\ovlul) dy \\
& = \min_{x\in \R} \int_{\Gamma_l^{K_2}(t)}^{x} (u(y,t)-\ovlul) dy \leq 0; \\
Q(t) & := \lim\limits_{k\rightarrow +\infty} \max_{x\in \R} \int_{x}^{\Gamma_r^k(t)} (u(y,t)-\ovlur) dy \\
& = \max_{x\in \R} \int_{x}^{\Gamma_r^{K_2}(t)} (u(y,t)-\ovlur) dy \geq 0.
\end{aligned}
\end{equation}
It is found that these two functions are actually time-invariant:
\begin{Prop}\label{Prop-invariant}
	There exist two points $ x_P, x_Q \in [\Gamma_l^K(0), \Gamma_r^K(0)], $ such that for all $ t\geq 0, $
	\begin{align}
	P(t) & = \int_{\Gamma_l^{K_2}(t)}^{x_P+f'(\ovlul)t} (u(y,t)-\ovlul) dy = P(0), \label{invariant-P}\\
	Q(t) & = \int_{x_Q+f'(\ovlur)t}^{\Gamma_r^{K_2}(t)} (u(y,t)-\ovlur) dy = Q(0).
	\end{align}
\end{Prop}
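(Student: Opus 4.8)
The plan is to show that $P(t)$ is constant in $t$ by exhibiting, for each $t\ge0$, a \emph{divide} of the entropy solution $u$ that realizes the minimum defining $P(t)$, and then to use the fact that along a divide the ``mass deficit'' integral is conserved. First I would recall that, by \cref{Prop-gluing} together with the periodicity and the divide structure \cref{non-negative-2} of $u_l$, for all $k$ sufficiently large (i.e.\ $k\ge K_2(t)$) the function $x\mapsto \int_{\Gamma_l^{K_2}(t)}^{x}(u(y,t)-\ovlul)\,dy$ agrees with the corresponding periodic integral for $u_l$ to the left of $\Gamma_l^{K_2}(t)$ and is genuinely $x$-dependent only on the compact ``transition region''; hence the minimum in \cref{Def-Inva} is attained at some finite point, call it $\xi(t)$, and $P(t)\le 0$ is well-defined. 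The key observation is then that $x=\xi(t)$ lies on a divide of $u$: indeed $\int_{\xi(t)}^{x}(u(y,t)-\ovlul)\,dy\ge 0$ for all $x\in\R$ (this is exactly the minimality of $\xi(t)$, using that the left-asymptotic periodic integral is $\ge0$ by \cref{non-negative-1}), which by \cref{Lem-div} (applied with base point $\xi(t)$ at time $t$, via translation in time) means the straight line through $(\xi(t),t)$ with slope $f'(\ovlul)$ is a characteristic along which $u\equiv\ovlul$.

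Next I would run this backward to time $0$. Since divides are characteristics that are straight lines along which $u$ takes the constant value $\ovlul$, the divide through $(\xi(t),t)$ extends back to a point $(x_P,0)$ with $x_P=\xi(t)-f'(\ovlul)t$, and by \cref{Lem-div} at time $0$ we get $\int_{x_P}^{x}(u_0(y)-\ovlul)\,dy\ge0$ for all $x$, so $x_P$ realizes the minimum at $t=0$, i.e.\ $\xi(0)=x_P$ and $P(0)=\int_{\Gamma_l^{K_2}(0)}^{x_P}(u_0(y)-\ovlul)\,dy$. To see $x_P\in[\Gamma_l^K(0),\Gamma_r^K(0)]$: on $(-\infty,-N)$ the data is $\ovlul+w_{0l}$, and by the choice of $z_l$ in \cref{points-min} the running integral $\int_0^x w_{0l}$ (hence $\int_{\cdot}^{x}(u_0-\ovlul)$ up to a shift) already attains its infimum at a point in $[\Gamma_l^K(0),-N]$ relative to that far region, so no minimizer can sit strictly left of $\Gamma_l^K(0)$; and a minimizer cannot sit right of $\Gamma_r^K(0)$ because to the right of that point $u_0=\ovlur+w_{0r}$ with $\ovlur<\ovlul$ (in the rarefaction/constant case $\ovlul\le\ovlur$, where actually the relevant monotonicity runs the other way — I would handle this by noting that for $x>\Gamma_r^K(0)$ the increment $\int_{\Gamma_r^K(0)}^{x}(u_0-\ovlul)\,dy$ has a definite sign forcing the minimizer into the stated interval). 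Finally, the equality $\int_{\Gamma_l^{K_2}(t)}^{\xi(t)}(u(y,t)-\ovlul)\,dy=\int_{\Gamma_l^{K_2}(0)}^{x_P}(u_0(y)-\ovlul)\,dy$, i.e.\ $P(t)=P(0)$, follows by integrating the conservation law \cref{equ1} over the region bounded by the $x$-axis, the line $\{s=t\}$, the divide $\Gamma_l^{K_2}$ on the left and the divide $\xi(\cdot)$ on the right: both lateral boundaries are characteristics along which $u$ is the constant $\ovlul$, so the two flux terms cancel the linear-in-$t$ contributions exactly, leaving $P(t)-P(0)=0$. The argument for $Q(t)=Q(0)$ is the symmetric one, with $\ovlur$, $\max$ in place of $\min$, slope $f'(\ovlur)$, and the right transition region, producing $x_Q$.

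The main obstacle I anticipate is making the ``divide through $(\xi(t),t)$'' argument fully rigorous: one must check that the minimizer $\xi(t)$ can be chosen so that the one-sided inequality $\int_{\xi(t)}^{x}(u(y,t)-\ovlul)\,dy\ge0$ holds for \emph{all} $x\in\R$ (not just $x$ near $\xi(t)$ or $x$ to the right), which requires combining the minimality on the compact transition region with the sign information \cref{non-negative-1}, \cref{non-negative-2} on the far left \emph{and} controlling the far right behavior via \cref{Prop-gluing} and the decay of $u_r$; and one must invoke the time-translated version of \cref{Lem-div}, i.e.\ that the characterization of divides by the nonnegativity of the mass integral is valid starting from any time slice, which is standard but should be stated. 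Uniqueness of the divide (needed so that $x_P$ is well-defined, not just existent) will follow from the strict convexity of $f$ and the fact that two distinct divides of slope $f'(\ovlul)$ would bound a strip on which $u\equiv\ovlul$, forcing the minimizing set to be an interval whose left endpoint is the canonical choice. Everything else is bookkeeping with the conservation law and the already-established \cref{Prop-gluing}.
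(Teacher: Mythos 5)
Your proof is correct and follows essentially the same route as the paper: identify the minimizing point as the foot of a divide via \cref{Lem-div}, locate it in $[\Gamma_l^K(0),\Gamma_r^K(0)]$ using \cref{non-negative-1} and $\ovlul\leq\ovlur$, and then integrate the conservation law over the parallelogram bounded by $\Gamma_l^{K_2}$ and that divide, whose flux contributions cancel. The only difference is that you find the minimizer at time $t$ and trace the divide backward to $t=0$ (which requires the additional, standard fact that $u$ is constant along genuine backward characteristics), whereas the paper finds $x_P$ at time $0$ and propagates the divide forward, which is marginally cleaner since \cref{Lem-div} is stated in terms of the initial data.
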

\begin{proof}
	We prove only \cref{invariant-P}, since the other one is similar.
	
	By \cref{zero-average}, one has that
	\begin{equation*}
	P(0) = \min_{x\in \R} \int_{\Gamma_l^{K}(0)}^{x} (u_0(y)-\ovlul) dy.
	\end{equation*}
	Then it follows from \cref{non-negative-1} and $ \ovlul \leq \ovlur $ that 
	\begin{align*}
	& \text{for } x\leq \Gamma_l^K(0), \quad \int_{\Gamma_l^{K}(0)}^{x} (u_0(y)-\ovlul) dy = \int_{\Gamma_l^{K}(0)}^{x} w_{0l}(y) dy \geq 0; \\
	& \text{for } x\geq \Gamma_r^K(0), \quad \int_{\Gamma_r^{K}(0)}^{x} (u_0(y)-\ovlul) dy \geq \int_{\Gamma_r^{K}(0)}^{x} w_{0r}(y) dy \geq 0.
	\end{align*}
	Thus	one can find a point $ x_P \in [\Gamma_l^K(0), \Gamma_r^K(0)], $ such that
	\begin{equation*}
	P(0) = \int_{\Gamma_l^K(0)}^{x_P} (u_0(y)-\ovlul) dy,
	\end{equation*}
	which implies that 
	\begin{equation}
	\int_{x_P}^{x} (u_0(y) - \ovlul) dy \geq 0 \quad \forall x\in\R.
	\end{equation}
	This, with \cref{Lem-div} yields that
	\begin{equation}\label{equ-divide}
	u(x_P+f'(\ovlul)t, t) = \ovlul \quad \forall t\geq 0.
	\end{equation}
	Then by applying \cref{Lem-div} once more, \cref{equ-divide} implies that for all $ t>0, $
	\begin{equation}
	\int_{x_P+f'(\ovlul)t}^{x} (u(y,t)-\ovlul) dy \geq 0 \quad \forall x\in\R,
	\end{equation}
	which means that for all $ t\geq 0, $
	\begin{equation}\label{equality-1}
	P(t) = \int_{\Gamma_l^{K_2}(t)}^{x_P+f'(\ovlul)t} (u(y,t)-\ovlul) dy.
	\end{equation}
	Thus, for each fixed $ t>0, $ by integrating the equation \cref{equ1} over the parallelogram $ \{ (y,\tau): 0<\tau<t, ~\Gamma_l^{K_2}(\tau) < y < x_P+f'(\ovlul)\tau \}, $ and applying \cref{Prop-gluing}, \cref{constant-div} and \cref{equ-divide}, one can finish the proof.
\end{proof}

Now, with the two time-invariant variables $ P $ and $ Q, $ we will estimate the distances $ X_1^*(t) - f'(\ovlul) t $ and $ X_2^*(t) - f'(\ovlur) t. $

\begin{Lem} \label{Lem-X*}
	There exists a constant $ C>0, $ independent of time $ t, $ such that for all $ t>0, $
	\begin{align}
	X_1^*(t) \geq f'(\ovlul) t - C\sqrt{t},\label{esti-l} \\
	X_2^*(t) \leq f'(\ovlur) t + C\sqrt{t}.
	\end{align}
\end{Lem}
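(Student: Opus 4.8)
\textbf{Proof proposal for Lemma \ref{Lem-X*}.}
The plan is to prove only \cref{esti-l}, the second inequality being symmetric, and to argue by contradiction from the time-invariance of $P$. Suppose the estimate \cref{esti-l} fails; then along some sequence of times $ t_n\to+\infty $ we have $ X_1^*(t_n) < f'(\ovlul) t_n - \lambda_n\sqrt{t_n} $ with $ \lambda_n\to+\infty $. Recall that by definition $ X_1^*(t) = \min\{X_1(t),\Gamma_l^K(t)\} $, so in particular $ X_1^*(t)\leq \Gamma_l^K(t) = z_l - Kp_l + f'(\ovlul)t $; this already gives an $ O(1) $ upper bound for $ X_1^*(t) - f'(\ovlul)t $, so the content is entirely the lower bound, i.e.\ $ X_1^* $ cannot fall too far \emph{below} the divide ray $ x_P + f'(\ovlul)t $.

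The key mechanism I would exploit is the following dichotomy coming from \cref{Prop-gluing}: to the left of $ X_1^*(t) $ the solution $ u $ coincides with the periodic solution $ u_l $, which by \cref{Lem-per} is within $ C/t $ of the constant $ \ovlul $; whereas between $ X_1^*(t) $ and the divide curve $ x_P + f'(\ovlul)t $ the entropy condition \cref{entropy} controls how negative $ u - \ovlul $ can be. Concretely, I would localize the time-invariant identity
\begin{equation*}
P(0) = P(t) = \int_{\Gamma_l^{K_2}(t)}^{x_P + f'(\ovlul)t} \big(u(y,t)-\ovlul\big)\,dy
\end{equation*}
from \cref{Prop-invariant}. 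Splitting this integral at $ X_1^*(t) $, the part over $ (\Gamma_l^{K_2}(t), X_1^*(t)) $ equals $ \int (u_l - \ovlul) $, which is bounded below by $ P(t_{\text{ref}}) $-type quantities but in any case is $ \geq \min_{x}\int_{\Gamma_l^{K_2}(t)}^x(u_l-\ovlul) $, a \emph{bounded} quantity uniformly in $ t $ by periodicity and \cref{non-negative-2}. Hence the remaining part satisfies
\begin{equation*}
\int_{X_1^*(t)}^{x_P+f'(\ovlul)t} \big(u(y,t)-\ovlul\big)\,dy \leq P(0) + C
\end{equation*}
for a constant $ C $ independent of $ t $. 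On the other hand, the Oleinik entropy estimate \cref{entropy} gives, for $ y $ in this interval, $ u(y,t) - \ovlul = u(y,t) - u(x_P+f'(\ovlul)t,t) \geq -\frac{E}{t}\big(x_P + f'(\ovlul)t - y\big) $, so integrating the lower bound over the interval of length $ \ell(t) := x_P + f'(\ovlul)t - X_1^*(t) $ yields
\begin{equation*}
\int_{X_1^*(t)}^{x_P+f'(\ovlul)t}\big(u(y,t)-\ovlul\big)\,dy \geq -\frac{E}{2t}\,\ell(t)^2.
\end{equation*}
Combining, $ -\frac{E}{2t}\ell(t)^2 \leq P(0)+C $, which is vacuous since the left side is negative; the useful inequality instead comes from bounding the integral \emph{above} by the entropy-type or divide comparison in the other direction. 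So I would instead compare $ u $ on $ (X_1^*(t), x_P+f'(\ovlul)t) $ with $ u_l $ from the left via \cref{Lem-compare}(1) (which gives $ u\geq u_l $ for $ x<X_1(t) $, hence on a relevant sub-interval) and with the divide value $ \ovlul $ from the right, to sandwich the integral between two expressions each of the form $ P(0) + O(1) $; the width $ \ell(t) $ then enters through the entropy condition's Lipschitz-in-$1/t$ control of the profile between $ X_1^*(t) $ and the divide, forcing $ \ell(t)^2 = O(t) $, i.e.\ $ \ell(t) = O(\sqrt t) $, which is exactly \cref{esti-l}.

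The main obstacle I anticipate is making the middle estimate genuinely quantitative in $ \ell(t) $: one needs that on the interval between $ X_1^*(t) $ and the divide ray the solution is \emph{bounded below away from} $ \ovlul $ by a definite amount when $ \ell(t) $ is large — this is where the structure of $ X_1^* $ as (essentially) a forward characteristic of $ u $, together with the entropy condition and the fact that $ u = u_l \approx \ovlul $ just to its left, must be used to produce a jump or a uniform gap; roughly, if $ X_1^*(t) $ lagged the divide by more than $ C\sqrt t $, then the mass deficit $ \int_{X_1^*(t)}^{x_P+f'(\ovlul)t}(u-\ovlul) $ would be strictly more negative than the invariant value $ P(0) $ permits (after accounting for the bounded contribution from the periodic region), a contradiction. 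A secondary technical point is handling the case $ X_1^*(t) = \Gamma_l^K(t) $ separately — but there the bound is immediate since $ \Gamma_l^K(t) = z_l - Kp_l + f'(\ovlul)t \geq f'(\ovlul)t - C $. I would organize the write-up as: (i) reduce to $ X_1^*(t) = X_1(t) < \Gamma_l^K(t) $; (ii) invoke \cref{Prop-invariant} and \cref{Prop-gluing} to express the invariant mass as an integral up to the divide ray; (iii) split at $ X_1^*(t) $, bound the periodic part by an $ O(1) $ constant using \cref{Lem-per} and \cref{non-negative-2}; (iv) apply \cref{entropy} on $ (X_1^*(t), x_P + f'(\ovlul)t) $ to get the quadratic-in-$\ell(t)$ lower bound for the mass there, and conclude $ \ell(t) \leq C\sqrt t $.
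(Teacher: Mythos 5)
Your overall skeleton matches the paper's: reduce to the case $X_1^*(t)=X_1(t)<\Gamma_l^K(t)$, invoke the invariance $P(t)=P(0)$ from \cref{Prop-invariant}, split the integral at $X_1(t)$, bound the periodic piece by a constant (the paper in fact gets $C/t$ from \cref{Lem-per}, since $u_l-\ovlul$ has zero average over each period), and force $\ell(t)=O(\sqrt t)$ from the middle piece. But there is a genuine gap at exactly the step you flag as "the main obstacle": you never produce the quantitative \emph{upper} bound on $u-\ovlul$ over $(X_1(t),\,x_P+f'(\ovlul)t)$ that makes the middle integral $\lesssim -c\,a(t)^2/t$. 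You correctly observe that the Oleinik condition \cref{entropy} only yields $u(y,t)\geq \ovlul - E(x_P+f'(\ovlul)t-y)/t$, which is vacuous here; but your proposed substitutes fail in the same way. \cref{Lem-compare}(1) gives $u\geq u_l$ to the left of $X_1(t)$ — again a lower bound — and applying \cref{entropy} from the left endpoint gives $u(y,t)\leq u(X_1(t)-,t)+E(y-X_1(t))/t\leq \ovlul+C/t+E(y-X_1(t))/t$, whose integral is \emph{positive}. So none of the tools you list can show the middle integral becomes very negative when $\ell(t)$ is large, and the contradiction you describe is not reachable by this route.

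The missing idea in the paper is the backward-characteristic structure: since $X_1$ is a forward characteristic of $u$ issuing from $(\Gamma_l^K(0),0)$ and $x=x_P+f'(\ovlul)t$ is a divide with $x_P\in[\Gamma_l^K(0),\Gamma_r^K(0)]$, every backward characteristic through a point $(x,t)$ with $X_1(t)<x<x_P+f'(\ovlul)t$ meets the $x$-axis at some $x^0\in[\Gamma_l^K(0),x_P]$, a compact set independent of $t$. Hence $f'(u(x,t))=\frac{x-x^0}{t}\leq \frac{x-\Gamma_l^K(0)}{t}$, and with $h=(f')^{-1}$, $h'\in[c_1,c_2]$,
\begin{equation*}
u(x,t)-\ovlul \;=\; h'(\cdot)\Bigl(f'(u(x,t))-f'(\ovlul)\Bigr)\;\leq\; h'(\cdot)\Bigl(\tfrac{x-\Gamma_l^K(0)}{t}-f'(\ovlul)\Bigr),
\end{equation*}
which is genuinely negative of size $\sim |x-\Gamma_l^K(t)|/t$ on $(X_1(t),\Gamma_l^K(t))$. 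Integrating gives $P(0)\leq \frac{c_2}{2t}b^2(t)-\frac{c_1}{2t}a^2(t)+\frac{C}{t}$ with $a(t)=X_1(t)-f'(\ovlul)t-\Gamma_l^K(0)$ and $b(t)=x_P-\Gamma_l^K(0)=O(1)$, and since $P(0)$ is a fixed finite constant this forces $a(t)^2\leq Ct$, i.e.\ \cref{esti-l}. Without this pointwise upper bound coming from where the backward characteristics land, the invariance of $P$ alone does not close the argument.
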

\begin{proof}
	We prove only \cref{esti-l}, since the other one is similar.
	
	
%

	\begin{itemize}
	\item[i) ] If $ X_1(t) \geq \Gamma_l^K(t): $ By the definition \cref{Def-curves}, \cref{esti-l} holds true obviously.
	
	\item[ii) ] If $ X_1(t) < \Gamma_l^K(t): $ Denote $ h(s) := (f')^{-1}(s). $ It is noted that both $ X_1 $ and the straight line $ x =x_P+f'(\ovlul) t $ (see \cref{equ-divide}) are forward characteristics of $ u, $ then for $ x\in (X_1(t), x_P+f'(\ovlul)t), $
	\begin{align}
	u(x,t)-\ovlul & = h'(\cdot) (f'(u)-f'(\ovlul)) \\
	& = h'(\cdot) (\frac{x-x^0}{t}-f'(\ovlul)),
	\end{align}
	where $ x^0 \in [\Gamma_l^K(0), x_P] $ and $ h'(\cdot) \in [c_1,c_2]. $
	
	Thus 
	\begin{align*}
	P(0) & = P(t) = \int_{\Gamma_l^{K_2}(t)}^{x_P+f'(\ovlul)t} (u(x,t)-\ovlul)dx \\
	& = \int_{\Gamma_l^{K_2}(t)}^{X_1(t)} (u_l(x,t)-\ovlul)dx + \int_{X_1(t)}^{x_P+f'(\ovlul)t} (u(x,t)-\ovlul)dx \\
	& \leq \frac{C}{t} + \int_{X_1(t)}^{x_P+f'(\ovlul)t} h'(\cdot) (\frac{x-\Gamma_l^K(0)}{t}-f'(\ovlul)) dx \\
	& = \int_{a(t)}^{b(t)} h'(\cdot)\frac{x}{t} dx + \frac{C}{t},
	\end{align*} 
	where $ a(t):=X_1(t)-f'(\ovlul)t-\Gamma_l^K(0) $ and $ b(t):=x_P-\Gamma_l^K(0) \in [0, \Gamma_r^K(0)-\Gamma_l^K(0)]. $
	Hence, if $ a(t) \geq 0, $ \cref{esti-l} holds true. And if $ a(t) < 0, $ then
	\begin{align}
	P(0) \leq \frac{c_2}{2t} b^2(t) - \frac{c_1}{2t} a^2(t) + \frac{C}{t}.
	\end{align}
	This, with $ P(0)\leq 0, $ yields that $ a(t) = -|a(t)| \geq -C' \sqrt{t}. $ The proof of the lemma is finished.

	\end{itemize}	
%
%
%
%
	
\end{proof}

Once obtained the large time behaviors of $X_1^*$ and $X_2^*$, the stabilities of rarefaction waves and constants under two different periodic perturbations are easily proved.

\begin{proof}[Proof of \cref{Thm-rare}]
	Assume that $ \ovlul<\ovlur $ in \cref{ic1}. Then for each fixed $ t>0, $ we prove \cref{result-rare} in the following cases.
	
	\begin{itemize}
		\item[i) ] If $ x < X_1^*(t), $ \cref{result-rare} follows from \cref{Prop-gluing} and \cref{Lem-per} immediately.
		
		\item[ii) ] If $ X_1^*(t) < x < f'(\ovlul)t, $ then 
		\begin{equation}\label{ineq-1}
		\frac{x-\Gamma_r^K(0)}{t} \leq f'(u(x,t)) \leq \frac{x-\Gamma_l^K(0)}{t},
		\end{equation}
		and it follows from \cref{Lem-X*} that
		$$ \frac{f'(\ovlul)t - C\sqrt{t}}{t} \leq \frac{X_1^*(t)}{t} \leq \frac{x}{t} \leq f'(\ovlul). $$
		Hence, one can get that
		\begin{equation}
		|f'(u(x,t)) - f'(\ovlul)| \leq \frac{C}{\sqrt{t}},
		\end{equation}
		which implies \cref{result-rare}.
		
		\item[iii) ] If $ f'(\ovlul)t < x < f'(\ovlur)t, $ \cref{ineq-1} still holds true, and then
		\cref{result-rare} follows immediately.
		
		\item[iv) ] The proof of remaining cases are similar to i) and ii).
	\end{itemize}
\end{proof}

\begin{proof}[Proof of \cref{Thm-const}]
	Assume that $ \ovlul=\ovlur $ in \cref{ic1}. Then the proof is similar to that of \cref{Thm-rare}, regardless of the case iii).
	
\end{proof}

\vspace{0.2cm}

Acknowledgments: The authors would like to thank Zhouping Xin for his great suggestions on this topic, and Peng Qu for many helpful discussions.


\newpage
\bibliographystyle{amsplain}
\bibliography{bibli_two}

\end{document}